\documentclass[11pt,a4paper]{amsart}
\usepackage[a4paper,centering,left=3.4cm,right=3.4cm]{geometry}
\usepackage{amsfonts,amscd,amssymb,amsmath,amsthm,mathrsfs,multirow,xcolor,lscape}
\usepackage{longtable,pbox,lipsum, stmaryrd,url, epstopdf, float, enumerate}
\usepackage[all,cmtip]{xy}
\usepackage[english]{babel}
\usepackage{color}

\usepackage{ucs}
\usepackage[utf8]{inputenc}
\usepackage{dsfont}
\usepackage[T1]{fontenc} 
\usepackage{lmodern}
\usepackage[thinlines]{easytable}

\usepackage{phaistos}
\usepackage{microtype}

\usepackage{enumitem}
\setcounter{tocdepth}{1}

\usepackage{scalerel}
\usepackage{todonotes}
\usepackage{stackengine,wasysym}
\usepackage{todonotes}
\usepackage[T1]{fontenc}

\usepackage[colorlinks,linkcolor=blue,anchorcolor=blue,citecolor=blue,backref=page]{hyperref}
\usepackage{hyperref}


\newtheorem{thm}{Theorem}[section]

\newtheorem{defi}[thm]{Definition}
\newtheorem{lemma}[thm]{Lemma}

\newtheorem{coro}[thm]{Corollary}
\newtheorem{rmk}[thm]{Remark}

\newtheorem{conjecture*}{Conjecture}
\newtheorem{notation}[thm]{Notation}

\newtheorem{pro}[thm]{Property}
\newtheorem*{theorem*}{Main Theorem}


\usepackage[all,cmtip]{xy}
\usepackage{tikz}
\usetikzlibrary{arrows} 
\usetikzlibrary{matrix}
\usetikzlibrary{calc}



\newcommand{\rB}{\mathrm{B}}
\newcommand{\rG}{\mathrm{G}}

\newcommand{\rN}{\mathrm{N}}

\newcommand{\rT}{\mathrm{T}}

\newcommand{\BA}{{\mathbb{A}}}

\newcommand{\BC}{{\mathbb{C}}}

\newcommand{\BI}{{\mathbb{I}}}

\newcommand{\BQ}{{\mathbb{Q}}}
\newcommand{\BR}{{\mathbb{R}}}

\newcommand{\BZ}{{\mathbb{Z}}}




\newcommand{\Hom}{\mathop{\rm Hom}\nolimits}

\newcommand{\Sym}{\mathop{\rm Sym}\nolimits}

\newcommand{\SL}{\mathop{\rm SL} \nolimits}

\newcommand{\GL}{\mathop{\rm GL} \nolimits}

\newcommand{\U}{\mathop{\rm U} \nolimits}
\newcommand{\SU}{\mathop{\rm SU} \nolimits}

\newcommand{\dm}{\mathop{\rm d} \nolimits}

\newcommand{\Eis}{\mathop{\rm Eis} \nolimits}

\newcommand{\iu}{\mathsf{i}}

\newcommand{\tmt}[4]{\left({#1\atop #3}{#2\atop #4}\right)}

\newcommand{\q}[1]{``#1''}
\newcommand\restr[2]{{
  \left.\kern-\nulldelimiterspace 
  #1 
  \right|_{#2} 
  }}




\title[Relative Lie algebra cohomology and Eisenstein classes of $\SU(2,1)$]{Relative Lie algebra cohomology of $\SU(2,1)$ and Eisenstein classes on Picard surfaces}  
\author{Jitendra Bajpai} 
\address{Department of Mathematics, University of Kiel, Germany.}
\email{jitendra@math.uni-kiel.de}
\author{Mattia Cavicchi}
\address{Laboratoire de Mathématiques d'Orsay, Orsay, France}
\email{mattia.cavicchi@universite-paris-saclay.fr}
\subjclass[2010]{Primary:11G40;14G35; Secondary:11F75;11F70;14D07}  
\keywords{Relative Lie algebra cohomology, principal series, Eisenstein cohomology}


\begin{document}
\date{\today}
\begin{abstract}
We consider Picard surfaces, locally symmetric varieties $S_{\Gamma}$ attached to the Lie group $\SU(2,1)$, and we construct explicit differential forms on $S_{\Gamma}$ representing Eisenstein classes, i.e. cohomology classes restricting non-trivially to the boundary of the Borel-Serre compactification. This is needed for the computation of the class of the extensions of the Hodge structure that we have constructed in \cite{BC22} according to the predictions of the Bloch-Beilinson conjectures. The tool for the construction of the differential forms is an analysis of relative Lie algebra cohomology of the principal series of $\SU(2,1)$ using recent methods of Buttcane and Miller. 
\end{abstract}

\maketitle

\tableofcontents


\section{Introduction}\label{intro}

This work is part of the program initiated by the two authors in~\cite{BC22}, aimed at studying the Bloch-Beilinson conjectures for Hecke characters through the cohomology of Picard surfaces. The purpose of this article, whose results are completely independent from the ones in~\cite{BC22}, is in fact to construct explicit Eisenstein differential forms on such surfaces. To specify what we mean by \q{explicit} and \q{Eisenstein}, to explain our methods, and to describe the interest of the results for our program, let us give some context. 

It is a general principle in the Langlands program (\cite{BG14}) that automorphic forms, which are \emph{algebraic} in an appropriate sense, should be attached to Galois representations, in such a way that $L$-functions on the automorphic and Galois side are matched. In this case, the zeroes of the $L$-function at suitable integers should carry arithmetic information of considerable interest. The classical setting, which has been the motivation for this whole circle of ideas since the '50s, is provided by algebraic Hecke characters, i.e. algebraic automorphic forms for $\GL_1$. To these, one knows how to attach, since Weil, a compatible system of $\ell$-adic Galois representations, which arise from geometry - more precisely, from a \emph{Chow motive} (\cite{DM91}). 

In the case of an algebraic Hecke character of odd weight $w$, the central point $\frac{w+1}{2}$ of the functional equation is an integer. The Bloch-Beilinson conjectures (\cite{Nek94}) predict then what to expect when $L(\phi,s)$ vanishes at $s=\frac{w+1}{2}$. There should exist a non-trivial extension of motives of a specific form, with an attached period and height pairing, in terms of which one should be able to express the leading term of the $L$-function at $s=\frac{w+1}{2}$. 

We have started investigating the case of algebraic Hecke characters of a quadratic imaginary number field in \cite{BC22}. For Hecke characters $\phi$ of weight $-3$ and of a special shape, whose $L$-function vanishes at the central point $s=-1$, we have launched an attempt to construct the expected \emph{extensions of mixed Hodge structure}. Following the insights of Harder \cite{Har93}, the source for the construction is the geometry of Picard surfaces $S_{\Gamma}$. The latter are quasi-projective complex algebraic surfaces, arising as ball quotients by a congruence subgroup $\Gamma$ of a non-split unitary group $\rG$, with real points $G \simeq \SU(2,1)$. The main result of \cite{BC22} uses the cohomology of $S_{\Gamma}$ to show the existence of extensions, which have the desired form, up to the computation of their extension class, which in particular, is not yet known to be non-zero. 

Let us explain the principle of construction. The non-compact $S_{\Gamma}$ admits a natural \emph{Borel-Serre compactification} $\overline{S_{\Gamma}}$, such that the open immersion 
$S_{\Gamma} \hookrightarrow \overline{S_{\Gamma}}$ is an homotopy equivalence. The singular cohomology spaces $H^{\bullet}(S_{\Gamma})$  and $H^{\bullet}(\overline{S_{\Gamma}})$ are then isomorphic and this produces a restriction map $r$ from $H^{\bullet}(S_{\Gamma})$ to the cohomology of the boundary $\partial \overline{S_{\Gamma}}$. Its image is called \emph{Eisenstein cohomology} $H^{\bullet}_{\Eis}(S_{\Gamma})$ and classes in $H^{\bullet}(S_{\Gamma})$ with non-zero image under $r$ are called \emph{Eisenstein classes}. The existence of suitable such classes in degree 2, depending on the behaviour of the $L$-function of the Hecke character that we are studying, is one of the deep inputs used in~\cite{BC22}. It allows us to exploit the exact sequence
\[
0 \rightarrow H^2_!(S_{\Gamma}) \rightarrow H^2(S_{\Gamma}) \rightarrow H^2_{\Eis}(S_{\Gamma}) \rightarrow 0 
\]
(where $H^2_!(S_{\Gamma})$ is the image in $H^2(S_{\Gamma})$ of cohomology with compact supports) to start constructing the looked-for extensions. The first goal of our program is to show their non-triviality. 

For this, we need to control the properties of the Eisenstein classes that we employ: namely, to exhibit explicit differential forms on $S_{\Gamma}$ representing them, such that we understand their position in the \emph{Hodge filtration}. This means that we need to understand their \emph{Hodge types} and their behaviour near the boundary of a \emph{toroidal compactification} of $S_{\Gamma}$. This article aims to provide representatives amenable to such an analysis. In a future work~\cite{BC}, we will carry out the study of their Hodge theory, and use that information to prove the non-triviality of the extensions constructed in~\cite{BC22}. 

In~\eqref{character_k}, we define a family of characters $\phi_{\infty}$, depending on an integer $k \geq 0$, of a maximal torus $\rT$ of the group $\rG$ underlying $S_{\Gamma}$. The choice of these characters is imposed by the fact that they give the components at infinity of the Hecke characters that we want to consider. The cohomology $H^2_{\Eis}(S_{\Gamma})$ is then expressed in terms of relative Lie algebra cohomology $H^2(\mathfrak{g}, \mathfrak{l}, I_{\phi_{\infty}} \otimes V_k)$ of the principal series representation $I_{\phi_{\infty}}$ of $G$ induced by $\phi_{\infty}$, tensored with the $k$-symmetric power $V_k$ of the standard representation $V$ of $G$, where $\mathfrak{g}$ is the Lie algebra of $G$ and  $\mathfrak{l}$ is the Lie algebra of the maximal compact subgroup $K_{\infty}$ of $G$. This cohomology space is known to be 1-dimensional (cfr.~Corollary~\ref{1dim}). Our main result, leading to the construction of explicit differential forms on $S_{\Gamma}$, is then the following. For the sake of this statement, we use the terminology \emph{differential forms at the boundary} for elements of the Chevalley-Eilenberg complex computing $H^{\bullet}(\mathfrak{g}, \mathfrak{l}, I_{\phi_{\infty}} \otimes V_k)$.

\begin{theorem*}{(Theorem~\ref{mainthm})}
Fix an integer $k \geq 0$. Any generator of $H^2(\mathfrak{g}, \mathfrak{l}, I_{\phi_{\infty}} \otimes V_k)$ is represented by a linear combination of explicit differential forms $\psi^k$ and $\psi^k_0$ at the boundary. The forms $\psi^k$ and $\psi^k_0$ are characterized, up to a non-zero constant, by the property of having type respectively $(1,1)$ and $(0,2)$. 
\end{theorem*}

In the above statement, the term \emph{type} is defined in terms of the bigrading on the complexified tangent space of $S_{\Gamma}$ (see~\eqref{bigrading}), arising from the complex structure carried by the locally symmetric space. It is related to the natural Hodge structure on $H^2(S_{\Gamma})$ - whose analysis, however, we do not pursue here. Indeed, we limit ourselves to explain in Corollary \ref{finalcoro} how the main theorem allows us to construct differential forms representing Eisenstein classes on $S_{\Gamma}$, but we will study their position in the Hodge filtration in the forthcoming paper~\cite{BC}. 

Let us comment on the relation of our main theorem with the existing literature and the ingredients of its proof. The relative Lie algebra cohomology of principal series representations has already been studied and computed in great generality in the foundational work~\cite{BoWa}. However, these methods do not provide us with explicit elements of the Chevalley-Eilenberg complex representing the classes we are interested in. We need to study their types (in the sense explained above) and the elements of $I_{\phi_{\infty}}$ appearing in their expression, as a crucial step towards the understanding of the associated Eisenstein classes at the boundary. Hence, we need first a concrete hold on the principal series representation $I_{\phi_{\infty}}$ itself, as a $(\mathfrak{g}, K_{\infty})$-module. 

In principle, the $K_{\infty}$-finite vectors in $I_{\phi_{\infty}}$ have been described in the classical paper~\cite{JW77}, by means of spherical harmonics on $\mathbb{S}^3$. The latter is diffeomorphic to $\SU(2)$ and hence to the derived subgroup of $K_{\infty}$. This description is however unwieldy for our needs. On the other hand, Buttcane and Miller have recently started in~\cite{BM19} an analysis of $K_{\infty}$-finite vectors in principal series representations, with explicit formulae for the $\mathfrak{g}$-action, by a method which can be applied to Lie groups $G$ with maximal compact subgroup $K_{\infty}$ isogenous to a product of copies of $\U(1)$ and $\SU(2)$. The case of interest for us, $G=\SU(2,1)$, has been carried out in detail by Z. Zhang in the preprint~\cite{Zha19}. The method resorts to the explicit parametrization of $\U(2)$ by \emph{Euler angles} and to the expression of $K_{\infty}$-finite vectors through \emph{Wigner $D$-functions} on $\U(2)$, i.e. matrix coefficients of the finite dimensional representations of the latter group. The technical heart of our work (Section~\ref{relativeLie}) consists then in dealing with the explicit formulas resulting from this analysis, and in using them to construct suitably explicit differential forms at the boundary of $S_{\Gamma}$. 

\subsection{Outline} In Section~\ref{structure}, we recall the basic information on the structure of the group $\SU(2,1)$ and of its maximal compact subgroups $K_{\infty}$, along with their Lie algebras $\mathfrak{g}$ and $\mathfrak{l}$ respectively. In Section \ref{princ}, we use this material to define the principal series representations $I_{\phi_{\infty}}$ of $G$, induced by the characters $\phi_{\infty}$ of the shape we are interested in, and we recall the results of \cite{Zha19} on their $K_{\infty}$-vectors and their structure as $(\mathfrak{g}, K_{\infty})$-modules. In Section \ref{relativeLie}, we build on these results to prove our main theorem (Theorem \ref{mainthm}), constructing explicit representatives of a generator of $H^2(\mathfrak{g}, \mathfrak{l}, I_{\phi_{\infty}} \otimes V_k)$. Finally, in Section~\ref{eisclasses}, we recall some results of Harder~\cite{Har87b} to explain how Theorem~\ref{mainthm} leads to the construction of differential forms representing Eisenstein classes on Picard surfaces (Corollary~\ref{finalcoro}). 

\subsection{Notations}
An empty entry in a matrix will mean that the given entry is equal to 0. For a matrix $A$, we will denote by $A^{\top}$ its transpose. If $A$ is a $n$-by-$n$ matrix and $\gamma \in \GL_n$, we will denote by $A^{\gamma}$ the conjugate $\gamma A \gamma^{-1}$, and by $H^{\gamma}$ the group $\gamma H \gamma^{-1}$ whenever $H$ is a subgroup of $\GL_n$.  

We will fix once and for all $\iu \in \BC$ such that $\iu^2=-1$. We denote by $z \mapsto \overline{z}$ complex conjugation on $\BC$ and by $v \mapsto \overline{v}$, $v \in \BC^n$, the induced conjugation on $\BC^n$ for any positive integer $n$. If $W$ is any $n$-dimensional $\BC$-vector space and $w \in W$, the notation $\overline{w}$ will stand for the complex conjugate of $w$ under an isomorphism $W \simeq \BC^n$ (which will be clear from the context).  

\section{Structure of $\SU(2,1)$ and $\U(2)$ and of their Lie algebras}\label{structure}

\subsection{The group $G$ and its maximal compact subgroups}\label{basics}
Fix a 3-dimensional complex vector space $V$, equipped with a hermitian form $J$ of signature $(2,1)$. 

\begin{defi}\label{group} The real Lie group $G$ is the group $\SU(2,1)$ of linear automorphisms of $V$ of determinant 1 preserving the form $J$.
\end{defi}
\begin{defi}\label{bases}$\,$
\begin{enumerate}
\item A \emph{diagonal basis} of $V$ is a $\BC$-basis of $V$ in which $J$ acquires the form 
\begin{equation*} 
\left(
\begin{array}{ccc}
1 &   & \\
  & 1 &  \\
  &    & -1 
\end{array}
\right)
\end{equation*}
\item A \emph{parabolic basis} of $V$ is a $\BC$-basis of $V$ in which $J$ acquires the form 
\begin{equation*} 
\left(
\begin{array}{ccc}
 &   & 1 \\
  & 1 &  \\
 1 &    &  
\end{array}
\right)
\end{equation*}
\end{enumerate}
\end{defi}
The two matrices representing $J$ respectively in a diagonal and in a parabolic basis are congruent to each other via 
\[
\gamma:=\left(
\begin{array}{ccc}
\frac{1}{\sqrt2} & & \frac{1}{\sqrt2} \\
& 1 & \\
\frac{1}{\sqrt2} & & -\frac{1}{\sqrt2}
\end{array}
\right)
\]
Whenever the choice of a basis is understood from the context, we will make no distinction in notation between the form $J$ and a matrix representing it. 

Any maximal compact subgroup of $G$ is isomorphic to the real Lie group $\mathrm{U}(2)$. We fix the maximal compact subgroup $K_{\infty}$, given in a diagonal basis by 
\begin{equation*} \label{maxcomp}
K_{\infty}:=
\left\{
\left(
\begin{array}{cc}
U & \\
 & \det(U)^{-1}
\end{array}
\right)
\ \vert \ U \in \mathrm{U} (2)
\right\}
\end{equation*}
In order to provide an explicit parametrization of $K_{\infty}$, observe that $\mathrm{U}(2) \simeq \mathrm{U}(1) \ltimes \mathrm{SU}(2)$
where the compact real Lie group $\mathrm{SU}(2)$ is defined by
\[
	\mathrm{SU}(2) = \left\{\tmt{\alpha}{-\bar{\beta}}{\beta}{\bar{\alpha}} | \alpha, \beta\in\mathbb{C}\text{ and }|\alpha|^2+|\beta|^2=1\right\}
\]
 
It is diffeomorphic to the unit 3-sphere $\mathbb{S}^3$, and we will parametrize it by \emph{Euler angles} as the group of matrices (cfr.~\cite[(2.40)]{BL81}) 
\begin{equation*}
\mathrm{SU}(2)=
\left\{ \left(\begin{smallmatrix}
	e^{-\frac{\mathsf{i}}{2}(\phi+\psi)}\cos\frac{\theta}{2}&-e^{\frac{\mathsf{i}}{2}(\phi-\psi)}\sin\frac{\theta}{2}\\
	e^{\frac{\mathsf{i}}{2}(-\phi+\psi)}\sin\frac{\theta}{2}&e^{\frac{\mathsf{i}}{2}(\phi+\psi)}\cos\frac{\theta}{2}
	\end{smallmatrix}\right) \vert \phi \in(-\pi,\pi],\theta\in[0,\pi],\psi\in(-\pi,3\pi] \right\}
\end{equation*}

By parametrizing
\[
U(1) \simeq \left\{ \left(\begin{smallmatrix} e^{-\frac{\mathsf{i} \zeta}{2}} & \\
& e^{-\frac{\mathsf{i} \zeta}{2}}
\end{smallmatrix}\right) \vert \zeta \in \BR \right\}
\]
as a subgroup of $\mathrm{U}(2)$, we get
{\scriptsize 
\begin{align} \label{param_K}
\begin{split}
\mathrm{U}(2) =\left\{ \left(\begin{smallmatrix}
	e^{\frac{\mathsf{i}}{2}(-\zeta-\phi-\psi)}\cos\frac{\theta}{2}&-e^{\frac{\mathsf{i}}{2}(-\zeta+\phi-\psi)}\sin\frac{\theta}{2}\\
	e^{\frac{\mathsf{i}}{2}(-\zeta-\phi+\psi)}\sin\frac{\theta}{2}&e^{\frac{\mathsf{i}}{2}(-\zeta+\phi+\psi)}\cos\frac{\theta}{2}
	\end{smallmatrix}\right)| \zeta \in \BR, \phi \in(-\pi,\pi], \theta\in[0,\pi], \psi\in(-\pi,3\pi] \right\}
\end{split}
\end{align}
}
Let us now describe the parabolic subgroups of $G$. To do so, we switch to parabolic bases. Fix such a basis, consider the corresponding embedding $G \hookrightarrow \SL_3(\BC)$ and denote by $\Delta$ the subgroup of upper triangular matrices of $\SL_3(\BC)$. We define the standard Borel $B$ of $G$ to be 
\begin{equation} \label{borel}
B : = G \cap \Delta
\end{equation}
The group $B$ is then isomorphic to the semidirect product $T \ltimes N$
where the maximal torus $T$ of $G$ is given by
\begin{equation} \label{maxtorus}
T:=\left\{
\left(
\begin{array}{ccc}
re^{\mathsf{i} t} & & \\
 & e^{-2 \mathsf{i} t} & \\
 & & r^{-1} e^{\mathsf{i} t}
\end{array}
\right)
\ \vert \ r, t \in \BR
\right\}
\end{equation}
and the unipotent radical $N$ of $B$
is given by 
\begin{equation} \label{unipotent}
N:=\left\{
\left(
\begin{array}{ccc}
1 & \overline{\nu} & \xi \\
 & 1 & \nu \\
 & & 1
\end{array}
\right)
\ \vert \ \nu, \xi \in \BC
\right\}
\end{equation}

The maximal torus $T$ decomposes as the product of the maximal split torus
\begin{equation*} 
A:=\left\{
\left(
\begin{array}{ccc}
r & & \\
 & 1 & \\
 & & r^{-1} 
\end{array}
\right)
\ \vert \ r \in \BR
\right\}
\end{equation*}
and of the compact torus
\begin{equation} \label{compactorus}
M:=\left\{
\left(
\begin{array}{ccc}
e^{\mathsf{i} t} & & \\
 & e^{-2\mathsf{i} t} & \\
 & & e^{\mathsf{i} t} 
\end{array}
\right)
\ \vert \ t \in \BR
\right\}
\end{equation}

\begin{rmk}\label{comptorus}
The torus $M$ coincides with $T \cap K_{\infty}^{\gamma^{-1}}$. It is fixed under conjugation by $\gamma$ and hence coincides with $T^{\gamma} \cap K_{\infty}$ as well. 
\end{rmk}

\begin{rmk}\label{iwasawa}
The Iwasawa decomposition of $G$ provides diffeomorphisms
\[
G \simeq K_{\infty}^{\gamma^{-1}} AN, \ \ \ G \simeq K_{\infty}A^{\gamma}N^{\gamma}
\]
\end{rmk}

\subsection{Lie algebras} Let $\mathfrak{g}$ be the Lie algebra of $G$. Upon fixing a basis of $V$, it is given by
$\mathfrak{g}= \{ X \in \mathfrak{sl}_3(\BC) \vert \overline{X}^{\top} J + JX =0 \}$. Let $\mathfrak{l}$ denote the Lie algebra of $K_{\infty}$.

\begin{pro} \label{csl3}
Fix a diagonal basis of $V$.
\begin{enumerate}
\item A $\BR$-basis for the real Lie algebra $\mathfrak{l}$ is given by 
\begin{align}
& U_0=\frac{1}{2} 
\left(
\begin{array}{ccc}
\iu & 0 & 0 \\
0 & \iu & 0 \\
0 & 0 & -2\iu
\end{array}
\right),\quad 
 U_1=\frac{1}{2} 
\left(
\begin{array}{ccc}
0 & \iu & 0 \\
\iu & 0 & 0 \\
0 & 0 & 0
\end{array}
\right),\\ 
& U_2=\frac{1}{2} 
\left(
\begin{array}{ccc}
0 & 1 & 0 \\
-1 & 0 & 0 \\
0 & 0 & 0
\end{array}
\right),\quad 
 U_3=\frac{1}{2} 
\left(
\begin{array}{ccc}
\iu & 0 & 0 \\
0 & -\iu & 0 \\
0 & 0 & 0
\end{array}
\right) \,.\nonumber
\end{align}
Moreover, $U_0$ and $U_3$ generate the Lie subalgebra $\mathfrak{m}$ given by the Lie algebra of the compact torus $M$ (cfr. Remark \ref{comptorus}).
We will use 
\begin{equation} \label{gen_l}
U_0, \ \ U_1 + \iu U_2, \ \ U_1-\iu U_2, \ \ U_3
\end{equation}
as generators of $\mathfrak{l}_{\BC}$ over $\BC$. 
\item Define the Lie algebra $\mathfrak{p} := \mathfrak{g} / \mathfrak{l}$. The Cartan decomposition provides an isomorphism of $\BR$-vector spaces 
\begin{equation} \label{dec_g}
\mathfrak{g} \simeq \mathfrak{l} \oplus \mathfrak{p}
\end{equation}
under which a $\BR$-basis of $\mathfrak{p}$ is given by
\begin{align}
& Y_{1}=
\left(
\begin{array}{ccc}
0 & 0 & 1 \\
0 & 0 & 0 \\
1 & 0 & 0
\end{array}
\right),\quad 
Y_{2}= 
\left(
\begin{array}{ccc}
0 & 0 & \iu \\
0 & 0 & 0 \\
-\iu & 0 & 0
\end{array}
\right),\\
& Y_{3}= 
\left(
\begin{array}{ccc}
0 & 0 & 0 \\
0 & 0 & 1 \\
0 & 1 & 0
\end{array}
\right),
Y_{4}=
\left(
\begin{array}{ccc}
0 & 0 & 0 \\
0 & 0 & \iu \\
0 & -\iu & 0
\end{array}
\right)\,. \nonumber
\end{align}

\item There is an isomorphism
\begin{equation} \label{realform}
\mathfrak{sl}_{3,\BC} \simeq \mathfrak{g}_{\BC} \simeq \mathfrak{g} \oplus \mathsf{i} \mathfrak{g}
\end{equation}
Under the complexification of \eqref{dec_g}, the Lie algebra
$\mathfrak{p}_{\BC} \simeq (\mathfrak{g} / \mathfrak{l})_{\BC}$ is generated over $\BC$ by
\begin{align}
& X_{1}=
\left(
\begin{array}{ccc}
0 & 0 & 1 \\
0 & 0 & 0 \\
0 & 0 & 0
\end{array}
\right),\quad 
X_{2}= 
\left(
\begin{array}{ccc}
0 & 0 & 0 \\
0 & 0 & 1 \\
0 & 0 & 0
\end{array}
\right),\\
& X_{3}= 
\left(
\begin{array}{ccc}
0 & 0 & 0 \\
0 & 0 & 0 \\
1 & 0 & 0
\end{array}
\right),\quad 
X_{4}=
\left(
\begin{array}{ccc}
0 & 0 & 0 \\
0 & 0 & 0 \\
0 & 1 & 0
\end{array}
\right)\,, \nonumber
\end{align}
so that
\begin{align} \label{gen_p}
 &    X_1=\frac{1}{2}(Y_1-\iu Y_2),  & X_2=\frac{1}{2}(Y_3-\iu Y_4), \\
 &    X_3=\frac{1}{2}(Y_1+\iu Y_2),  & X_4=\frac{1}{2}(Y_3+\iu Y_4)\,.\nonumber
\end{align}
Hence the complex conjugation $c$ on $\mathfrak{sl}_{3,\BC}$ 
induced by the isomorphism \eqref{realform} exchanges $X_1$ with $X_3$ and $X_2$ with $X_4$. 
\item In the Lie algebra $\mathfrak{g}_{\BC}$, we have
\begin{equation} \label{zero_bracket}
[X_i,X_j]=0 \ \mbox{mod} \ \mathfrak{l}_{\BC} \ \forall i,j \in \{1, \dots, 4 \}\,.
\end{equation}

\item Under the complexification of \eqref{dec_g}, the action of $\mathfrak{l}_{\BC}$ on $\mathfrak{g}_{\BC}$ by Lie bracket  stabilizes the Lie algebra $\mathfrak{p}_{\BC}$. Its action on the generators $X_i$ is recorded in the following table:
{\begin{center}
\begin{table} [htbp] 
\caption{Action of $\mathfrak{l}_{\BC}$ on $\mathfrak{p}_{\BC}$}\label{l_on_p}
\renewcommand{\arraystretch}{2}
\begin{tabular}{|c|c|c|c|c|}
 \hline
 & $U_0$ & $U_1+\iu U_2$ & $U_1- \iu U_2$ & $U_3$ \\
\hline
$X_1$ & $\frac{3 \iu}{2}X_1$ & $0$ & $\iu X_2$ & $\frac{\iu}{2}X_1$ \\
$X_2$ & $\frac{3 \iu}{2}X_2$ & $\iu X_1$ & $0$ & $-\frac{\iu}{2}X_2$ \\
$X_3$ & $-\frac{3 \iu}{2}X_3$ & $-\iu X_4$ & $0$ & $-\frac{\iu}{2}X_3$ \\
$X_4$ & $-\frac{3 \iu}{2}X_4$ & $0$ & $-\iu X_3$ & $\frac{\iu }{2}X_4$ \\
\hline
\end{tabular}
\end{table}
\end{center}
}
We then get a decomposition into a direct sum of commutative subalgebras, exchanged the one with the other by the complex conjugation $c$,
\begin{equation} \label{+-}
\mathfrak{p}_{\BC} \simeq \mathfrak{p}^+ \oplus \mathfrak{p}^-
\end{equation}
where $\mathfrak{p}^+$ is the subalgebra of $\mathfrak{p}_{\BC}$ generated by $X_1,X_2$ and $\mathfrak{p}^-$ is the subalgebra of $\mathfrak{p}_{\BC}$ generated by $X_3,X_4$.
\end{enumerate}
\end{pro}
\section{Principal series representations of SU$(2,1)$}\label{princ}
We now recall the structure of the principal series representations of $G$. Let $\phi_{\infty}$ be a character of $T$. We extend $\phi_{\infty}$ to the standard Borel $B$ of $G$ by letting it be trivial on the unipotent radical $N$. Then we consider the space\footnote{Note that we always work with \emph{algebraic}, i.e. non-unitarily normalized, induction.}\label{induction}
\begin{equation} \label{princseries}
I_{\phi_{\infty}} := \{ f : G \rightarrow \BC \ \mbox{smooth} | \ f(gb) = \phi^{-1}_{\infty}(b) f(g) \ \forall \ b \in B, g \in G\}
\end{equation}

\begin{defi}
The \emph{principal series representation} of $G$ associated to $\phi_{\infty}$ is the space $I_{\phi_{\infty}}$ endowed with the action of $G$ by \emph{left translations} $g.f : x \mapsto f(g^{-1}x)$.
\end{defi}

\begin{rmk}
The above representation is isomorphic as a $G$-representation to the space $\left\{ f : G \rightarrow \BC \ \mbox{smooth} | \ f(bg) = \phi_{\infty}(b) f(g) \ \forall \ b \in B, g \in G\right\}$
endowed with the action of $G$ by \emph{right translations} $g.f : x \mapsto f(xg)$. We have chosen to describe it using the action by left translations so that our conventions are compatible with~\cite{Zha19}.
\end{rmk}

Our goal is to describe the subspace of $K_{\infty}$-finite vectors in $I_{\phi_{\infty}}$. To this end, we fix half-integers $j,n,m_1,m_2$ satisfying the following properties:
\[
j \in \frac{1}{2}\BZ_{\geq0}; n \in \frac{1}{2}\BZ \ \ \ \mbox{such that} \ j+n \in \BZ; \quad m_1, m_2 \in \{-j, -j+1, \dots, j \}.
\] 

Consider the $\BC$-valued smooth function $W^{j,n}_{m_1,m_2}$ on $K_{\infty} \simeq \mathrm{U}(2)$, parametrized as in~\eqref{param_K}, defined by 
\[
W^{j,n}_{m_1,m_2} : \left(\begin{smallmatrix}
	e^{\frac{\mathsf{i}}{2}(-\zeta-\phi-\psi)}\cos\frac{\theta}{2}&-e^{\frac{\mathsf{i}}{2}(-\zeta+\phi-\psi)}\sin\frac{\theta}{2}\\
	e^{\frac{\mathsf{i}}{2}(-\zeta-\phi+\psi)}\sin\frac{\theta}{2}&e^{\frac{\mathsf{i}}{2}(-\zeta+\phi+\psi)}\cos\frac{\theta}{2}
	\end{smallmatrix}\right)
 \mapsto c^j_{m_1}c^j_{m_2}e^{\mathsf{i} n\zeta}e^{\mathsf{i}(m_1\psi+m_2\phi)}d^{(j,n)}_{m_1,m_2}(\theta) 
\]
where $c^j_m = \sqrt{(j+m)!(j-m)!}$ and the function $d^{(j,n)}_{m_1,m_2}(\theta)$ is given by
\[
d^{(j,n)}_{m_1,m_2}(\theta)=\frac{\left(\sin\frac{\theta}{2}\right)^{m_1-m_2}\left(\cos\frac{\theta}{2}\right)^{m_1+m_2}}{(j+m_2)!(j-m_2)!}P^{(m_1-m_2,m_1+m_2)}_{j-m_1}(\cos\theta)
\] 
with $P^{\alpha,\beta}_c(x)$ denoting the \emph{Jacobi polynomial} of parameters $(\alpha, \beta, c)$ (cfr.~\cite[(3.65)-(3.72)]{BL81}). 

It can be proven (see for example \emph{op. cit.}) that the functions $W^{j,n}_{m_1,m_2}$ (called \emph{Wigner D-functions}) give the matrix coefficients for the irreducible finite-dimensional representations of $\mathrm{U}(2)$. As a consequence of the Peter-Weyl theorem, this implies that the family of Wigner $D$-functions, for varying $j,n,m_1,m_2$, provides a Hilbert space basis for $L^2(K_{\infty})$.

Now consider the Iwasawa decomposition $G \simeq K_{\infty}A^{\gamma}N^{\gamma}$ of Remark~\ref{iwasawa}
and use it to define the unique extension of the function $W^{j,n}_{m_1,m_2}$ to the whole of $G$, such that the resulting function belongs to $I_{\phi_{\infty}}$. We will still denote these functions by $W^{j,n}_{m_1,m_2}$.

From now on, we will fix an integer $k \geq 0$ and work with the character $\phi_{\infty}$ defined by
\begin{equation} \label{character_k}
\left(
\begin{array}{ccc}
re^{\iu t} & & \\
 & e^{-2 \iu t} & \\
 & & r^{-1} e^{\iu t}
\end{array}
\right) \mapsto r^{3} (e^{\iu t })^{(2k+3)} 
\end{equation}
Then, a computation shows the following (cfr.~\cite[4.1]{Zha19}, keeping into account that by definition (\cite[last display of page 17]{Zha19}), our $I_{\phi_{\infty}}$ corresponds to Zhang's $I(\chi_{\delta,\lambda})$ with $(\delta, \lambda) =(2k+3,1)$):
\begin{lemma} \label{Kfinite}
The $K_{\infty}$-finite vectors in $I_{\phi_{\infty}}$ are finite linear combinations of Wigner $D$-functions $W^{j,n}_{m_1,m_2}$ whose parameters $j,n,m_1,m_2$ with $j \in \frac{1}{2}\BZ_{\geq0}, n, m_1, m_2 \in \frac{1}{2}\BZ$
satisfy 
\begin{equation}\label{para1}
-3j-2k-3 \leq n \leq 3j-2k-3, m_1, m_2 \in \{-j, \dots, j \}, 
\end{equation}
and 
\begin{equation} \label{para2}
3m_2-2k-3=n
\end{equation}
\end{lemma}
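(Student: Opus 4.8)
The statement is a computation of which Wigner $D$-functions $W^{j,n}_{m_1,m_2}$ survive when we impose the transformation law \eqref{princseries} under right translation by the compact torus $M \subset B$, for the specific character $\phi_{\infty}$ of \eqref{character_k}. The plan is to trace through the Iwasawa decomposition $G \simeq K_{\infty}A^{\gamma}N^{\gamma}$ of Remark \ref{iwasawa}. First I would recall that a function in $I_{\phi_{\infty}}$ is determined by its restriction to $K_{\infty}$, and that restriction is constrained: if $b \in B \cap K_{\infty}^{\gamma}$, i.e. $b$ lies in the compact torus $M^{\gamma} = M$ (using Remark \ref{comptorus}, $M = T^{\gamma} \cap K_{\infty}$), then for $f = W^{j,n}_{m_1,m_2}$ we need $f(gb) = \phi_{\infty}^{-1}(b) f(g)$ for all $g \in K_{\infty}$. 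So the core of the argument is to evaluate both sides of this identity using the explicit Euler-angle parametrization \eqref{param_K} and the explicit formula for $W^{j,n}_{m_1,m_2}$, and read off the constraints on $(j,n,m_1,m_2)$.

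The key steps in order: \textbf{(1)} Identify $M \subset K_{\infty}$ explicitly in the Euler-angle coordinates of \eqref{param_K}. Since $M$ in the diagonal basis (after conjugating by $\gamma$) consists of matrices $\mathrm{diag}(e^{\iu t}, e^{-2\iu t})$ inside the $\U(2)$-block, comparing with \eqref{param_K} shows this corresponds to $\theta = 0$ and a specific relation among $\zeta, \phi, \psi$ — one coordinate direction in the torus. \textbf{(2)} Right-translate the Euler-angle matrix by this one-parameter subgroup and see how $(\zeta, \phi, \psi, \theta)$ transform; because $\theta = 0$ for the torus element, the multiplication is diagonal and one gets a clean shift in the angular variables. \textbf{(3)} Plug into $W^{j,n}_{m_1,m_2}(g b) = c^j_{m_1}c^j_{m_2} e^{\iu n \zeta'} e^{\iu(m_1 \psi' + m_2 \phi')} d^{(j,n)}_{m_1,m_2}(\theta')$ with the shifted variables, and compare with $\phi_{\infty}^{-1}(b) \cdot W^{j,n}_{m_1,m_2}(g)$, where $\phi_{\infty}^{-1}$ is computed from \eqref{character_k}: on the compact part $\mathrm{diag}(e^{\iu t}, e^{-2\iu t}, e^{\iu t})$ the character gives $(e^{\iu t})^{2k+3}$, so $\phi_{\infty}^{-1}$ contributes $e^{-\iu(2k+3)t}$. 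Matching the exponentials in $t$ forces one linear relation between $n$ and $m_2$ (and possibly $k$) — this should be \eqref{para2}, $3m_2 - 2k - 3 = n$. \textbf{(4)} The inequality \eqref{para1} then comes from combining this relation with the a priori constraint $m_2 \in \{-j, \dots, j\}$ (multiply $-j \le m_2 \le j$ by $3$ and substitute $3m_2 = n + 2k + 3$), together with the fact that, since the function must genuinely appear in the principal series (not just satisfy the torus-equivariance formally), the $n$-range is automatically the one dictated by $m_2$. \textbf{(5)} Finally cite \cite[4.1]{Zha19} with the translation $(\delta, \lambda) = (2k+3, 1)$ to confirm that these are exactly the constraints appearing there, so no further conditions arise.

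I expect the main obstacle to be \textbf{step (1)–(2)}: pinning down precisely which one-parameter subgroup of the Euler-angle coordinates corresponds to $M$ inside $K_{\infty}$ after the conjugation by $\gamma$, and getting the sign conventions and factors of $\frac{1}{2}$ in the exponents to line up, since $M$ sits in $G$ via a parabolic basis while $K_{\infty}$ and its parametrization \eqref{param_K} are given in a diagonal basis — the two are related by $\gamma$, and Remark \ref{comptorus} is the bridge but one must use it carefully. Once the correct subgroup and its action on the Euler angles are in hand, steps (3)–(4) are a bookkeeping exercise of matching exponentials, and step (5) is a citation. An alternative, cleaner route would be to bypass the explicit Euler-angle manipulation entirely and derive \eqref{para1}–\eqref{para2} from the infinitesimal action: the vectors $W^{j,n}_{m_1,m_2}$ are eigenvectors for the Lie algebra $\mathfrak{m}_{\BC}$ generated by $U_0, U_3$ (Proposition \ref{csl3}(1)), and the principal series condition forces the $\mathfrak{m}$-eigenvalue to equal $d\phi_{\infty}$ on $\mathfrak{m}$; computing the $U_0$- and $U_3$-eigenvalues of $W^{j,n}_{m_1,m_2}$ from the explicit formula and equating with the differential of \eqref{character_k} gives \eqref{para2} directly, and \eqref{para1} follows as in step (4). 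I would likely present whichever of these two is shorter after checking the conventions against \cite{Zha19}.
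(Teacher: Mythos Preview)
Your plan is correct and is in fact more detailed than what the paper does: the paper's ``proof'' of this lemma is simply the sentence preceding it, citing \cite[4.1]{Zha19} together with the parameter identification $(\delta,\lambda)=(2k+3,1)$. Your steps (1)--(4) reproduce the computation that Zhang carries out, and they go through exactly as you outline: in the Euler-angle coordinates right multiplication by $m(t)=\mathrm{diag}(e^{\iu t},e^{-2\iu t},e^{\iu t})\in M$ shifts $(\zeta,\phi,\psi,\theta)\mapsto(\zeta+t,\phi-3t,\psi,\theta)$, so $W^{j,n}_{m_1,m_2}(gm(t))=e^{\iu(n-3m_2)t}W^{j,n}_{m_1,m_2}(g)$, and equating with $\phi_\infty^{-1}(m(t))=e^{-\iu(2k+3)t}$ gives \eqref{para2}, from which \eqref{para1} follows by $-j\le m_2\le j$.

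One small caution about your infinitesimal alternative: the eigenvalues you need are those for the \emph{right} $\mathfrak{m}$-action (since the defining relation $f(gb)=\phi_\infty^{-1}(b)f(g)$ is on the right), whereas Lemma~\ref{l-act} records the \emph{left} action. The right torus action picks out $n$ and $m_2$ rather than $n$ and $m_1$; this is consistent with \eqref{para2} involving $m_2$, and you will see it immediately if you compute from the explicit formula as you propose, but it is worth being aware of so that you do not accidentally import the $m_1$-eigenvalue from Lemma~\ref{l-act}.
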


In what follows, we will abusively identify $I_{\phi_{\infty}}$ and its subspace of $K_{\infty}$-finite vectors. The latter is equipped with an action of the Lie algebra $\mathfrak{g}$ by left differentiation and with a compatible action of $K_{\infty}$ by left translations, giving it the structure of a $(\mathfrak{g}, K_{\infty})$-module. 
\begin{notation}
We will write $\texttt{dl}(x)(f)$ for the action of $x \in \mathfrak{g}$ by left differentiation on $f$, 
\[
\texttt{dl}(x)(f)(g):= \frac{\dm}{\mbox{d} t}f(e^{-tx}g)_{\vert_{t=0}} 
\]
\end{notation}
First, let us record the formulae describing the induced action of the Lie algebra $\mathfrak{l}_{\BC}$. 

\begin{lemma} \label{l-act}{(~\cite[Eq.~(24)-(25)]{Zha19})}
The action of the generators of $\mathfrak{l}_{\BC}$ (defined in \eqref{gen_l}) by left differentiation on $W_{m_1,m_2}^{j,n}$ is given by
\begin{align}
& \texttt{dl}(U_0)(W_{m_1,m_2}^{j,n})=\iu n W_{m_1,m_2}^{j,n},  \nonumber\\
& \texttt{dl}(U_3)(W_{m_1,m_2}^{j,n})= \iu m_1 W_{m_1,m_2}^{j,n}\\ \nonumber
& \texttt{dl}(U_1 \pm \iu U_2)(W_{m_1,m_2}^{j,n} )= -\iu \sqrt{(j \mp m_1)(j \pm m_1 +1)}\, W_{m_1 \pm 1,m_2}^{j,n} \,. \nonumber
\end{align}
\end{lemma}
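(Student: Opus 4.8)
The statement to prove, Lemma~\ref{l-act}, is a direct computation: we must evaluate $\texttt{dl}(x)$ on the Wigner $D$-function $W^{j,n}_{m_1,m_2}$ for $x$ running through the generators $U_0$, $U_1+\iu U_2$, $U_1-\iu U_2$, $U_3$ of $\mathfrak{l}_{\BC}$. Since $\mathfrak{l}$ is the Lie algebra of $K_\infty$ and the $W^{j,n}_{m_1,m_2}$ are (extensions to $G$ of) genuine functions on $K_\infty$ satisfying the equivariance $f(gb)=\phi_\infty^{-1}(b)f(g)$, the left differentiation by an element of $\mathfrak{l}$ only sees the restriction to $K_\infty$; hence the whole computation can be carried out on $\U(2)$ with its Euler-angle parametrization~\eqref{param_K}. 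The plan is therefore: (i) express each generator $U_i$ as the tangent vector at the identity of a one-parameter subgroup $e^{-tU_i}$ of $K_\infty$; (ii) translate the left action of $e^{-tU_i}$ on the Euler angles $(\zeta,\phi,\theta,\psi)$; (iii) differentiate the explicit formula for $W^{j,n}_{m_1,m_2}$ in those angles and read off the answer.

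\textbf{Key steps.} First I would note that $U_0$ and $U_3$ act diagonally: under the diagonal embedding $K_\infty\simeq\U(2)$, the element $U_0$ generates (up to the determinant block, which is fixed by the normalization) the center direction parametrized by $\zeta$, and $U_3=\tfrac12\,\mathrm{diag}(\iu,-\iu,0)$ generates the $\psi$-direction inside $\SU(2)$. More precisely, $e^{-tU_0}$ acts on the Euler angles by a shift of $\zeta$ and $e^{-tU_3}$ by a shift of $\psi$, so that $\texttt{dl}(U_0)W^{j,n}_{m_1,m_2}$ and $\texttt{dl}(U_3)W^{j,n}_{m_1,m_2}$ are obtained by differentiating $e^{\iu n\zeta}$ and $e^{\iu m_1\psi}$ respectively; this yields the factors $\iu n$ and $\iu m_1$, using the sign in the definition of $\texttt{dl}$ to fix orientation. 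For the raising/lowering operators $U_1\pm\iu U_2$, the relevant one-parameter subgroups are off-diagonal rotations in the $\SU(2)$-block; their left action mixes the Euler angles $(\phi,\theta,\psi)$ nonlinearly, so the clean route is to recognize that $\texttt{dl}(U_1),\texttt{dl}(U_2),\texttt{dl}(U_3)$ realize the standard $\mathfrak{su}(2)$ triple acting on the matrix coefficients of the spin-$j$ representation of $\SU(2)$. By Peter--Weyl / the classical theory of the $D$-functions (as in~\cite{BL81}), this action is exactly the ladder action on the $m_1$-index: $\texttt{dl}(U_1\pm\iu U_2)$ sends $W^{j,n}_{m_1,m_2}$ to a multiple of $W^{j,n}_{m_1\pm1,m_2}$ with coefficient $-\iu\sqrt{(j\mp m_1)(j\pm m_1+1)}$. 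Concretely, one checks this by differentiating the Jacobi-polynomial expression for $d^{(j,n)}_{m_1,m_2}(\theta)$ together with the exponential prefactors $e^{\iu(m_1\psi+m_2\phi)}$, using the standard contiguity relations for Jacobi polynomials; the normalization constants $c^j_{m}=\sqrt{(j+m)!(j-m)!}$ absorb the unwanted combinatorial factors so that the square-root coefficient comes out in the symmetric form stated. The $m_2$-index and the $n$-index are untouched because $U_1,U_2,U_3$ act only through the first matrix slot of the matrix coefficient, leaving the right-$\U(2)$-behaviour (governed by $\phi$ and $\zeta$) fixed.

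\textbf{Main obstacle.} The delicate point is matching conventions: the sign in $\texttt{dl}(x)(f)(g)=\frac{\dd}{\dd t}f(e^{-tx}g)|_{t=0}$, the particular Euler-angle ranges and ordering in~\eqref{param_K}, the half-integer normalization of the $U_i$ (note the $\tfrac12$ in Proposition~\ref{csl3}), and the phase conventions in the definition of $d^{(j,n)}_{m_1,m_2}(\theta)$ must all be threaded consistently so that the raising operator genuinely raises $m_1$ (rather than lowering it) and the coefficient has the displayed sign $-\iu$. This is exactly the bookkeeping carried out in~\cite{Zha19}; the cleanest exposition is to reduce everything to the $\SU(2)$-block, invoke the well-known action of angular-momentum operators on Wigner $D$-functions, and then simply record~\cite[Eq.~(24)--(25)]{Zha19} after verifying that our normalizations of $U_0,U_1,U_2,U_3$ and of the character~\eqref{character_k} agree with Zhang's $(\delta,\lambda)=(2k+3,1)$. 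No genuinely new mathematics is needed beyond this verification and the classical representation theory of $\SU(2)$.
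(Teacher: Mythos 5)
Your proposal is correct, and it takes essentially the same approach as the paper, which offers no proof at all and simply cites \cite[Eq.~(24)--(25)]{Zha19}; your sketch (reduce to the $\U(2)$-block, identify $U_0,U_3$ with shifts of $\zeta,\psi$ and $U_1\pm\iu U_2$ with the $\mathfrak{su}(2)$ ladder operators on the $m_1$-index) is precisely the computation underlying Zhang's formulas. One minor remark: unlike for the $\mathfrak{p}_{\BC}$-action of Theorem~\ref{g-action}, the verification that $\phi_\infty$ matches Zhang's $(\delta,\lambda)=(2k+3,1)$ is not actually needed here, since left differentiation by $\mathfrak{l}$ commutes with the $A^{\gamma}N^{\gamma}$-factor of the Iwasawa decomposition, so the $\mathfrak{l}$-action on the $W^{j,n}_{m_1,m_2}$ depends only on their restriction to $K_\infty$ and is entirely independent of $\phi_\infty$.
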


Second, by translating its statement into our notations, we recall the main result of~\cite{Zha19}. It provides an explicit description of the action of the generators of $\mathfrak{p}_{\BC}$ (as defined in~\eqref{gen_p}) on $I_{\phi_{\infty}}$. 

\begin{rmk}  
To make the notational comparison with~\cite[Prop.~4.1]{Zha19}, we proceed as follows : 
\begin{enumerate}
\item we use Table~\ref{notation},
\begin{table}[htbp]
\caption{Notational Comparisons}\label{notation}
\renewcommand{\arraystretch}{2}
\begin{tabular}{ | c | c | c | c | } 
\hline
 Our notation & Notation of~\cite{Zha19} & Sign & $m_{\alpha}$ \\
 \hline
  $X_1$ & $v_{\alpha_1 + \alpha_2}$ & $+$ & $\frac{1}{2}$  \\
\hline
$X_2$ & $-v_{\alpha_2}$ & $+$ & $-\frac{1}{2}$ \\
 \hline
$X_3$ & $v_{-\alpha_1-\alpha_2}$ & $-$ & $-\frac{1}{2}$  \\
 \hline
 $X_4$ & $v_{-\alpha_2}$ & $-$ & $\frac{1}{2}$ \\
\hline
\end{tabular}
\end{table}
\item we remember that by definition, given our choice of $I_{\phi_{\infty}}$, the integer $\lambda$ which appears in~\cite{Zha19} is equal to 1 (compare (\cite[last display of page 17]{Zha19})),
\item we observe that the correct formula is not the one appearing in the \emph{statement} of~\cite{Zha19}, but the one appearing at the end of its \emph{proof} (\cite[page~21]{Zha19}). 
\end{enumerate}
\end{rmk}

\begin{thm}{\cite[Prop.~4.1]{Zha19}}\label{g-action}
Let $W^{j,n}_{m_1,m_2}$ be a Wigner $D$-function in $I_{\phi_{\infty}}$. Then the action of the generators of $\mathfrak{p}_{\BC}$ by left differentiation on $W_{m_1,m_2}^{j,n}$ is given by
\begin{align*}
& \texttt{dl}(X_1)(W_{m_1,m_2}^{j,n})= & \frac{1}{2(2j+1)}[-\sqrt{(j-m_1)(j-m_2)}(2j+m_2-n-1)W^{j-\frac{1}{2},n+\frac{3}{2}}_{m_1+\frac{1}{2},m_2+\frac{1}{2}} + \nonumber \\
& & + \sqrt{(j+m_1+1)(j+m_2+1)}(2j-m_2+n+3)W^{j+\frac{1}{2},n+\frac{3}{2}}_{m_1+\frac{1}{2},m_2+\frac{1}{2}} ] \nonumber \\
& \texttt{dl}(X_2)(W_{m_1,m_2}^{j,n})= & -\frac{1}{2 (2j+1)}[\sqrt{(j+m_1)(j-m_2)}(2j+m_2-n-1)W^{j-\frac{1}{2},n+\frac{3}{2}}_{m_1-\frac{1}{2},m_2+\frac{1}{2}} + \nonumber \\
& & + \sqrt{(j-m_1+1)(j+m_2+1)}(2j-m_2+n+3)W^{j+\frac{1}{2},n+\frac{3}{2}}_{m_1-\frac{1}{2},m_2+\frac{1}{2}}] \nonumber \\
& \texttt{dl}(X_3)(W_{m_1,m_2}^{j,n})= & \frac{1}{2 (2j+1)}[-\sqrt{(j+m_1)(j+m_2)}(2j-m_2+n-1)W^{j-\frac{1}{2},n-\frac{3}{2}}_{m_1-\frac{1}{2},m_2-\frac{1}{2}} + \nonumber \\
& & + \sqrt{(j-m_1+2)(j-m_2+1)}(2j+m_2-n+3)W^{j+\frac{1}{2},n-\frac{3}{2}}_{m_1-\frac{1}{2},m_2-\frac{1}{2}} ] \nonumber \\
& \texttt{dl}(X_4)(W_{m_1,m_2}^{j,n})= & \frac{1}{2 (2j+1)}[\sqrt{(j-m_1)(j+m_2)}(2j-m_2+n-1)W^{j-\frac{1}{2},n-\frac{3}{2}}_{m_1+\frac{1}{2},m_2-\frac{1}{2}} + \nonumber \\
& & + \sqrt{(j+m_1+1)(j-m_2+1)}(2j+m_2-n+3)W^{j+\frac{1}{2},n-\frac{3}{2}}_{m_1+\frac{1}{2},m_2-\frac{1}{2}} ] \nonumber \\
\end{align*}
\end{thm}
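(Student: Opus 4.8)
The statement to be proved is the explicit formula for the $\mathfrak{p}_{\BC}$-action on Wigner $D$-functions, i.e.\ Theorem~\ref{g-action}, which is attributed to~\cite[Prop.~4.1]{Zha19}. The plan is therefore to recall the strategy of \emph{op.\ cit.}\ and to carefully carry out the translation from Zhang's conventions to ours, since the preceding Remark warns that the formula as stated in~\cite{Zha19} contains a typo and one must use the version appearing at the end of its proof. First I would set up the Iwasawa decomposition $G \simeq K_{\infty}A^{\gamma}N^{\gamma}$ (Remark~\ref{iwasawa}) and observe that, for $X \in \mathfrak{p}_{\BC}$ and a $K_{\infty}$-finite vector $f \in I_{\phi_{\infty}}$, the function $\texttt{dl}(X)(f)$ is again $K_{\infty}$-finite, hence a finite linear combination of Wigner $D$-functions; the problem is to pin down the coefficients. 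The key reduction is that $\texttt{dl}(X)(f)$ is determined by its restriction to $K_{\infty}$, because any element of $I_{\phi_{\infty}}$ is determined by its values there (an element of $I_{\phi_{\infty}}$ is a function on $G$ fixed up to $\phi_{\infty}^{-1}$ by right $B$-translation, and $K_{\infty}$ meets every $B$-coset by Iwasawa).

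Second, I would compute $\texttt{dl}(X_i)(W^{j,n}_{m_1,m_2})(k)$ for $k \in K_{\infty}$. Writing $e^{-tX_i}k = k(t)\,a(t)\,n(t)$ in the Iwasawa decomposition for the curve $t \mapsto e^{-tX_i}k$ and differentiating at $t=0$, one gets a sum of two contributions: one from the $\mathfrak{l}$-component of the infinitesimal Iwasawa decomposition of $X_i^{k}$ (conjugate by $k$), acting by the formulas of Lemma~\ref{l-act}, and one from the $\mathfrak{a}^{\gamma}\oplus\mathfrak{n}^{\gamma}$-component, acting through the character $\phi_{\infty}$ (here the derivative of $\phi_{\infty}$ along $\mathfrak{a}^{\gamma}$ enters, producing the integer $\lambda=1$ from~\eqref{character_k}, while the $\mathfrak{n}^{\gamma}$-component is killed since $\phi_{\infty}$ is trivial on $N$). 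The decomposition~\eqref{+-} tells us $X_1,X_2$ lie in $\mathfrak{p}^+$ and $X_3,X_4$ in $\mathfrak{p}^-$, which governs whether $n$ shifts up or down by $\tfrac32$; Table~\ref{l_on_p} controls the $\mathfrak{l}$-weights, hence the shifts of $m_1$; and the constraint~\eqref{para2} relating $n$ and $m_2$ forces the simultaneous shift of $m_2$ by $\pm\tfrac12$.

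Third — and this is where the bulk of the bookkeeping lies — the precise \emph{rational/square-root} coefficients come from the Clebsch–Gordan-type recursions satisfied by the Jacobi polynomials $P^{(\alpha,\beta)}_c$ entering the definition of $d^{(j,n)}_{m_1,m_2}(\theta)$, together with the normalizing constants $c^j_m = \sqrt{(j+m)!(j-m)!}$. Concretely, $X_i$ acts on the $\theta$-dependence by a first-order differential operator in $\theta$ (plus multiplication operators coming from $\zeta,\phi,\psi$), and the identity expressing $(\partial_\theta + \text{lower order})\,d^{(j,n)}_{m_1,m_2}$ as a combination of $d^{(j\pm\frac12,\,n\pm\frac32)}_{m_1\pm\frac12,m_2\pm\frac12}$ is exactly a contiguous relation for Jacobi polynomials; combining it with the $c^j_m$ factors produces the displayed coefficients $\pm\sqrt{(j\mp m_1)(\dots)}(2j\mp m_2 \pm n + \cdots)/(2(2j+1))$. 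Rather than redo this calculation, I would cite~\cite[Prop.~4.1 and its proof, p.~21]{Zha19} for the result in Zhang's notation and then devote the remaining argument to the \emph{dictionary}: using Table~\ref{notation} to match $X_1 \leftrightarrow v_{\alpha_1+\alpha_2}$, $X_2 \leftrightarrow -v_{\alpha_2}$, $X_3 \leftrightarrow v_{-\alpha_1-\alpha_2}$, $X_4 \leftrightarrow v_{-\alpha_2}$ (tracking the sign and the value of $m_\alpha$ in each row), substituting $\lambda = 1$, $\delta = 2k+3$ as dictated by~\eqref{character_k}, and substituting the corrected coefficient from the end of Zhang's proof in place of the one in his statement. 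The main obstacle is purely one of care: keeping the half-integer shifts of all four indices $(j,n,m_1,m_2)$, the signs attached to $X_2$ versus $v_{\alpha_2}$, and the placement of the $+1,+2,+3$ shifts inside the square roots and the degree factors consistent between the two sources, so that the final four displays are internally coherent and compatible with Lemma~\ref{Kfinite} and Lemma~\ref{l-act}. A useful internal consistency check, which I would include, is to verify that each right-hand side only involves Wigner $D$-functions whose parameters again satisfy~\eqref{para1}–\eqref{para2}, and that applying the complex conjugation $c$ of Proposition~\ref{csl3}(3) (which swaps $X_1\leftrightarrow X_3$, $X_2\leftrightarrow X_4$) exchanges the first two displays with the last two, up to the expected sign-and-conjugation symmetry of Wigner $D$-functions.
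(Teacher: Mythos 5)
Your proposal takes essentially the same route as the paper: the paper gives no independent proof of Theorem~\ref{g-action} but simply cites~\cite[Prop.~4.1]{Zha19}, with the preceding Remark serving as the notational dictionary (Table~\ref{notation}, the substitution $\lambda=1$, and the warning to use the corrected coefficients from the end of Zhang's proof rather than from his statement). Your sketch of Zhang's underlying Iwasawa-plus-Jacobi-recursion computation and the proposed consistency checks (parameter constraints~\eqref{para1}--\eqref{para2}, compatibility with the conjugation $c$ exchanging $X_1\leftrightarrow X_3$, $X_2\leftrightarrow X_4$) are useful additions but do not constitute a different method; the core of your argument, as of the paper's, is the cite-and-translate step.
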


\section{Relative Lie algebra cohomology}\label{relativeLie}

We fix an integer $k \geq 0$ and keep the notation of the preceding section. In particular, we have a character $\phi_{\infty}$ of the maximal torus $T$ defined as in \eqref{character_k}, and the corresponding principal series representation $I_{\phi_{\infty}}$ of $G$, whose structure as a $(\mathfrak{g}, K_{\infty})$-module has been described in Lemmas~\ref{Kfinite} and~\ref{l-act} and in Theorem \ref{g-action}. The aim of this section, which is the heart of the paper, is to compute explicit generators of the $(\mathfrak{g}, K_{\infty})$-cohomology spaces valued in $I_{\phi_{\infty}} \otimes V_k$, where $V_k$ is the $k$-th symmetric power of $V$. 

\subsection{Finite dimensional $(\mathfrak{g}, K_{\infty})$-modules}
Choose a basis for $V$ and hence an isomorphism $V \simeq \BC^{3}$. Let $V_k$ be the space of complex degree-$k$ homogeneous polynomials in three variables, on which $g \in G$ if represented by a matrix $M_g$ with respect to the given basis, acts via 
$$g.p(x,y,z):= p((M_g)^{\top}(x,y,z)^{\top}).$$ It is an irreducible representation of the real Lie group $G$, isomorphic to the representation $\Sym^k V$. By differentiating the above action, $V_k$ acquires the structure of a $(\mathfrak{g}, K_{\infty})$-module.

Let us describe (a part of) its structure explicitly, by choosing a diagonal basis for $V$ and by complexifying the Lie algebras coming into play. We will only make use of the action of the generators of $\mathfrak{l}_{\BC}$ and $\mathfrak{p}_{\BC}$ on monomials of the form $x^{k-l}y^l$, for $l \in \{0, \dots, k \}$. The action of the generators of $\mathfrak{l}_{\BC}$ (cfr.~\eqref{gen_l}) is then given by 

\begin{align} \label{act_poly}
U \cdot x^{k-l}y^l = \left\{ 
\begin{array}{lll}
\iu \frac{k}{2}x^{k-l} y^{l}  &U=U_0 \\
\iu l x^{k-l+1} y^{l-1} & U=U_1+\iu U_2 \\
\iu (k-l) x^{k-l-1} y^{l+1} & U=U_1-\iu U_2 \\
\iu \frac{k-2l}{2} x^{k-l} y^{l} & U=U_3
\end{array}
\right.
\end{align}

On the other hand, the action of the generators of $\mathfrak{p}_{\BC}$ (cfr.~\eqref{gen_p}) on $V_k$ is given by 

\begin{equation}\label{g_act_poly}
X \cdot x^{k-l}y^l = \left\{ 
\begin{array}{cc}
0 & X=X_1 \\ 
0 & X=X_2 \\ 
(k-l)x^{k-l-1}y^l z & X=X_3 \\ 
lx^{k-l}y^{l-1}z & X=X_4 \\ 
\end{array}
\right.
\end{equation}

\subsection{Relative Lie algebra cohomology and the Delorme isomorphism}

The tensor product of the two $(\mathfrak{g}, K_{\infty})$-modules $I_{\phi_{\infty}}$ and $V_k$ has a natural structure of $(\mathfrak{g}, K_{\infty})$-module, with action of $\mathfrak{g}$ on $f \otimes p(x,y,z) \in I_{\phi_{\infty}} \otimes V_k$ given by
\[
\ X \in \mathfrak{g}, \ X.(f \otimes p(x,y,z)) := \texttt{dl}(X)(f) \otimes p(x,y,z) + f \otimes \frac{\dm}{\mbox{d} t} p((e^{tX})^T(x,y,z)^T)_{\vert_{t=0}}
\]

\begin{defi} \label{Liecomplex}
Consider the action of $\mathfrak{l}$ on $\Lambda^{q+1}(\mathfrak{g} / \mathfrak{l})$ given, for all positive integers $q$, by
\[
U \in \mathfrak{l}, \ U.(X_0, \dots, X_q)=\sum_{i=0}^q (X_0, \dots, [U,X_i], \dots, X_q)
\]

The \emph{Chevalley-Eilenberg} complex computing \emph{relative Lie algebra cohomology} of the $(\mathfrak{g}, K_{\infty})$-module $I_{\phi_{\infty}} \otimes V_k$ is the complex
\[
\cdots \rightarrow \Hom_{\mathfrak{l}}(\Lambda^{q}(\mathfrak{g} / \mathfrak{l}), I_{\phi_{\infty}} \otimes V_k) \xrightarrow{\rm{d}_q} \Hom_{\mathfrak{l}}(\Lambda^{q+1}(\mathfrak{g} / \mathfrak{l}), I_{\phi_{\infty}} \otimes V_k) \rightarrow \cdots
\]
 where the differential sends $\psi \in \Hom_{\mathfrak{l}}(\Lambda^{q}(\mathfrak{g} / \mathfrak{l}), I_{\phi_{\infty}} \otimes V_k)$ to  
\begin{align} \label{diff}
\rm{d}_q(\psi)(X_0, \dots, X_q) &= \sum_{i=0}^{q} (-1)^i X_i.\psi(X_0,\dots,\hat{X}_i,\dots, X_q) +\\
& + \sum_{i<j} (-1)^{i+j} \psi([X_i,X_j],X_0, \dots,\hat{X_i},\dots,\hat{X}_j,\dots,X_q) \nonumber
\end{align}
with the hat denoting an omitted argument. 
\end{defi}

The classical approach to the computation of the cohomology of the above complex, denoted by $H^{\bullet}(\mathfrak{g}, K_{\infty}, I_{\phi} \otimes V_k)$ makes use of the following result, due to Delorme. 

\begin{thm}{\cite[III, Theorem 3.3]{BoWa}}\label{delorme}
Let $\mathfrak{t}$ be the Lie algebra of the maximal torus $T$ of $G$ (cfr.~\eqref{maxtorus}), $\mathfrak{m}$ the Lie algebra of its compact subtorus $M$ (cfr.~\eqref{compactorus}), and $\mathfrak{n}$ the Lie algebra of the unipotent radical $N$ (cfr.~\eqref{unipotent})
of the Borel $B$ of $G$. Then
\[
H^{\bullet}(\mathfrak{g}, K_{\infty}, I_{\phi_{\infty}} \otimes V_k) \simeq \Hom(\Lambda^{\bullet}(\mathfrak{t} / \mathfrak{m}), (H^{\bullet}(\mathfrak{n}, V_k) \otimes \BC_{\phi_{\infty}})(0))
\]
where $\BC_{\phi_{\infty}}$ is the one-dimensional $\BC$-representation of $T$ via the character $\phi_{\infty}$, and $(0)$ means taking the weight-zero space for the induced action of $\mathfrak{t}$.
\end{thm}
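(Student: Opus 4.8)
The plan is to reproduce the two-step mechanism underlying Delorme's theorem: first turn $(\mathfrak g,K_\infty)$-cohomology of the induced module into ordinary relative Lie algebra cohomology over the Borel $\mathfrak b$, by a Frobenius-reciprocity (Shapiro) argument, and then evaluate the latter through the Hochschild--Serre spectral sequence attached to the ideal $\mathfrak n\subset\mathfrak b$. For the first step the crucial input is the algebraic description of the principal series: the $(\mathfrak g,K_\infty)$-module of $K_\infty$-finite vectors in the smoothly induced $I_{\phi_\infty}$ of \eqref{princseries} coincides with the Zuckerman-type produced module $\mathrm{pro}^{\mathfrak g,K_\infty}_{\mathfrak b,M}(\BC_{\phi_\infty})$ (Casselman; see also \cite{BoWa}, Chapter III). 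Here $M=T\cap K_\infty^{\gamma^{-1}}$ is the maximal compact subgroup of the Levi $T$ of $B$ (Remark~\ref{comptorus}), with Lie algebra $\mathfrak m$. Since $\mathrm{pro}$ is right adjoint to the restriction functor from $(\mathfrak g,K_\infty)$-modules to $(\mathfrak b,M)$-modules, computing $\Ext$ in both categories gives a natural isomorphism
\[
H^\bullet(\mathfrak g,K_\infty;I_{\phi_\infty}\otimes V_k)\;\cong\;H^\bullet(\mathfrak b,M;V_k\otimes\BC_{\phi_\infty}).
\]
Because we are using non-unitary (``algebraic'') induction, no $\rho$-shift intervenes and $\BC_{\phi_\infty}$ appears undecorated; with unitarily normalised induction this step would instead produce a twist of $\phi_\infty$ by the half-sum of roots in $\mathfrak n$.

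For the second step, note $\mathfrak n$ is an ideal of $\mathfrak b$ with $\mathfrak b/\mathfrak n\cong\mathfrak t$ and $\mathfrak n\cap\mathfrak m=0$, so the relative Hochschild--Serre spectral sequence reads
\[
E_2^{p,q}=H^p\bigl(\mathfrak t,\mathfrak m;\,H^q(\mathfrak n;V_k)\otimes\BC_{\phi_\infty}\bigr)\;\Longrightarrow\;H^{p+q}(\mathfrak b,M;V_k\otimes\BC_{\phi_\infty}),
\]
where I have used $H^q(\mathfrak n;V_k\otimes\BC_{\phi_\infty})=H^q(\mathfrak n;V_k)\otimes\BC_{\phi_\infty}$ since $\mathfrak n$ acts trivially on $\BC_{\phi_\infty}$. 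Now $\mathfrak t=\mathfrak m\oplus\mathfrak a$ is abelian, $\mathfrak m$ acts trivially on $\mathfrak t/\mathfrak m\cong\mathfrak a$, and $\mathfrak a$ acts diagonalisably on the finite-dimensional space $W:=H^q(\mathfrak n;V_k)\otimes\BC_{\phi_\infty}$. Decomposing $W$ into $\mathfrak t$-weight spaces, the relative Chevalley--Eilenberg complex $\Hom(\Lambda^\bullet(\mathfrak t/\mathfrak m),W)$ splits into a direct sum of Koszul complexes, one for each weight $\mu$, with differential given by exterior multiplication by $\mu\in\mathfrak a^{*}$. Such a Koszul complex is acyclic for $\mu\neq 0$ and has zero differential for $\mu=0$, whence $H^p(\mathfrak t,\mathfrak m;W)=\Hom(\Lambda^p(\mathfrak t/\mathfrak m),W(0))$ and therefore
\[
E_2^{p,q}=\Hom\bigl(\Lambda^p(\mathfrak t/\mathfrak m),\,(H^q(\mathfrak n;V_k)\otimes\BC_{\phi_\infty})(0)\bigr).
\]

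It remains to see $E_2=E_\infty$ and to reassemble. Here $G=\SU(2,1)$ has real rank one, so $\dim(\mathfrak t/\mathfrak m)=\dim\mathfrak a=1$, the index $p$ is confined to $\{0,1\}$, every higher differential $d_r$ ($r\ge 2$) has target in a zero group, and degeneration is automatic; adding up over $p+q=n$ yields exactly
\[
H^\bullet(\mathfrak g,K_\infty;I_{\phi_\infty}\otimes V_k)\cong\Hom\bigl(\Lambda^\bullet(\mathfrak t/\mathfrak m),(H^\bullet(\mathfrak n;V_k)\otimes\BC_{\phi_\infty})(0)\bigr),
\]
as claimed. The two genuinely delicate points — and the places where one must cite rather than recompute — are (i) the identification in the first step of the smoothly induced $I_{\phi_\infty}$ with an algebraically produced $(\mathfrak g,K_\infty)$-module, together with the bookkeeping of normalisation/twist conventions; and (ii) the degeneration of the spectral sequence, which is innocuous here thanks to real rank one but is the substantive content of Delorme's theorem in general, where it is established by a weight and infinitesimal-character argument built on Kostant's description of $H^\bullet(\mathfrak n;V_k)$ as a $\mathfrak t$-module.
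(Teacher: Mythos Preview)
The paper does not give its own proof of this statement: Theorem~\ref{delorme} is quoted verbatim from \cite[III, Theorem~3.3]{BoWa} and used as a black box to deduce Corollary~\ref{1dim}. Your sketch is a faithful outline of the Borel--Wallach argument itself---Frobenius reciprocity for the produced module, the Hochschild--Serre spectral sequence for $\mathfrak n\lhd\mathfrak b$, the Koszul computation of $H^\bullet(\mathfrak t,\mathfrak m;\,\cdot\,)$, and degeneration---with the correct observation that in real rank one the degeneration is automatic for bidegree reasons, so Kostant's theorem is only needed afterwards (to evaluate the right-hand side) rather than to force $E_2=E_\infty$. The two caveats you flag (the identification of the smooth induction with the algebraic $\mathrm{pro}$-functor on $K_\infty$-finite vectors, and the normalisation of $\phi_\infty$ versus the $\rho$-shift) are exactly the points one has to be careful about, and you have handled the latter consistently with the paper's convention of non-unitary induction.
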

The Lie algebra cohomology $H^{\bullet}(\mathfrak{n}, V_k)$ and its structure as a $\mathfrak{t}$-representation can be computed thanks to a theorem of Kostant (see for example~\cite[Theorem 3.2.3]{Vog81}). One finds that 
\begin{equation*} 
(H^i(\mathfrak{n}, V_k) \otimes \BC_{\phi_{\infty}}) (0) \simeq \left\{ 
\begin{array}{ll}
\BC & i=2 \\ 
0 & \mbox{otherwise} \\ 
\end{array}
\right.
\end{equation*}
which yields the following corollary of Theorem \ref{delorme}. 
\begin{coro} \label{1dim}
We have 
\begin{equation*} 
H^i(\mathfrak{g}, K_{\infty}, I_{\phi_{\infty}} \otimes V_k) \simeq \left\{ 
\begin{array}{ll}
\BC & i=2 \\ 
0 & \mbox{otherwise} \\ 
\end{array}
\right.
\end{equation*}
\end{coro}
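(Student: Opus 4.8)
The plan is to derive the corollary from Theorem~\ref{delorme} by making its right-hand side explicit; the only real content lies in describing $H^{\bullet}(\mathfrak{n},V_k)$ together with its structure as a module over the Cartan subalgebra $\mathfrak{t}_{\BC}\subset\mathfrak{sl}_{3,\BC}$. First I would invoke Kostant's theorem on the cohomology of a nilradical (already cited above): since $\mathfrak{n}$ is the unipotent radical of the Borel $B$ of~\eqref{borel}, it yields
\[
H^{j}(\mathfrak{n},V_k)\;\simeq\;\bigoplus_{w\in S_3,\ \ell(w)=j}\BC_{\,w\cdot\lambda_k}\,,\qquad w\cdot\lambda_k:=w(\lambda_k+\rho)-\rho\,,
\]
where $\lambda_k$ is the highest weight of $V_k\simeq\Sym^kV$, $\rho$ the half-sum of the positive roots, and $S_3$ the Weyl group of $\mathfrak{sl}_{3,\BC}$, with Coxeter length $\ell$. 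In particular $H^{\bullet}(\mathfrak{n},V_k)$ is a direct sum of one-dimensional $\mathfrak{t}_{\BC}$-weight spaces, with $\dim H^{j}=1,2,2,1$ for $j=0,1,2,3$.

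Second, I would carry out the weight bookkeeping. Fixing a parabolic basis as in Definition~\ref{bases} and identifying $\mathfrak{t}_{\BC}$ with the diagonal Cartan of $\mathfrak{sl}_{3,\BC}$, one expresses the relevant weights in the standard coordinates $\epsilon_1,\epsilon_2,\epsilon_3$: one gets $\lambda_k=k\epsilon_1$, $\rho=\epsilon_1-\epsilon_3$, and --- reading~\eqref{character_k} and keeping in mind that the induction defining $I_{\phi_{\infty}}$ is the unnormalized one --- a weight $\mu_k$ for $\BC_{\phi_{\infty}}$ which a direct computation gives as $3\epsilon_1-k\epsilon_2$, up to the relation $\epsilon_1+\epsilon_2+\epsilon_3=0$ in the weight lattice. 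Then $(H^{j}(\mathfrak{n},V_k)\otimes\BC_{\phi_{\infty}})(0)\neq 0$ precisely when some $w\in S_3$ with $\ell(w)=j$ satisfies $w(\lambda_k+\rho)=\rho-\mu_k$, i.e. when the tuple $\rho-\mu_k$ is a rearrangement of $\lambda_k+\rho$ up to adding a multiple of $\epsilon_1+\epsilon_2+\epsilon_3$. The entries of $\lambda_k+\rho$ are pairwise distinct for every $k\geq 0$, so there is at most one such $w$; a short computation shows that there is exactly one, namely a $3$-cycle, which in $S_3$ has length $2$. This proves the formula $(H^{i}(\mathfrak{n},V_k)\otimes\BC_{\phi_{\infty}})(0)\simeq\BC$ for $i=2$ and $0$ otherwise, and feeding it into the Delorme isomorphism of Theorem~\ref{delorme} gives the corollary.

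The one delicate point --- a mild one --- is the normalization accounting in the second step: choosing the positive system so that $\mathfrak{n}$ is genuinely the nilradical of $B$, identifying $\mathfrak{t}_{\BC}$ with the Cartan of $\mathfrak{sl}_{3,\BC}$ compatibly with the parabolic basis and with the form $J$, and in particular tracking the unnormalized (``algebraic'') induction convention used in~\eqref{princseries}. A useful sanity check is that the restriction to the split part $\mathfrak{a}\subset\mathfrak{t}$ of the weight of the surviving Kostant summand must vanish after twisting by $\phi_{\infty}$; this is where the exponent $r^{3}$ of~\eqref{character_k} enters, and it is reflected in the fact that the relevant Weyl element has length $2$ rather than $1$ or $3$. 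Everything else is routine: the Kostant computation is mechanical, and the passage from the displayed formula to the corollary is immediate. One could instead try to argue directly inside the Chevalley--Eilenberg complex of Definition~\ref{Liecomplex}, using the explicit $\mathfrak{g}$-action of Theorem~\ref{g-action}, but that would be considerably more laborious and is unnecessary for this qualitative statement.
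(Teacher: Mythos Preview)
Your proposal is correct and follows exactly the route taken in the paper: apply the Delorme isomorphism of Theorem~\ref{delorme}, compute $H^{\bullet}(\mathfrak{n},V_k)$ via Kostant's theorem, and identify the unique Weyl element whose dotted action on $\lambda_k$ cancels the weight of $\phi_{\infty}$. The paper simply states the outcome of the Kostant computation without writing out the weight bookkeeping, whereas you make it explicit (and correctly: your $\mu_k=3\epsilon_1-k\epsilon_2$ does match~\eqref{character_k}, and $\rho-\mu_k\equiv(-1,k+1,0)$ is indeed the $3$-cycle permutation of $\lambda_k+\rho=(k+1,0,-1)$); but the underlying argument is the same.
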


However, the above abstract isomorphism does not provide us with an explicit element of the degree-2 part of the Chevalley-Eilenberg complex of Definition~\ref{Liecomplex}, representing a generator of $H^2(\mathfrak{g}, K_{\infty}, I_{\phi_{\infty}} \otimes V_k)$. 

\subsection{Explicit representatives of a generator}
 We want to provide explicit representatives of a generator of $H^2(\mathfrak{g}, \mathfrak{l}, I_{\phi_{\infty}} \otimes V_k)$. In other words, we need to construct closed, non-exact elements of $\Hom_{\mathfrak{l}}(\Lambda^{2}(\mathfrak{g} / \mathfrak{l}), I_{\phi_{\infty}} \otimes V_k)$. 

We will proceed as follows (an instance of \emph{Weyl's unitary trick}). Using the isomorphism~\eqref{realform}, the action of $\mathfrak{g}$ on $I_{\phi_{\infty}} \otimes V_k$ can be $\BC$-linearly extended to an action of the Lie algebra $\mathfrak{sl}_{3,\BC}$ on that same space. With respect to the restriction of this action to $\mathfrak{l}_{\BC}$, any element of $\Hom_{\mathfrak{l}}(\Lambda^{2}(\mathfrak{g} / \mathfrak{l}), I_{\phi_{\infty}} \otimes V_k)$ can be $\BC$-linearly extended to an element of $\Hom_{\mathfrak{l}_{\BC}}(\Lambda^{2}(\mathfrak{g} / \mathfrak{l})_{\BC}, I_{\phi_{\infty}} \otimes V_k)$. Reciprocally, any 
 element of $\Hom_{\mathfrak{l}_{\BC}}(\Lambda^{2}(\mathfrak{g} / \mathfrak{l})_{\BC}, I_{\phi_{\infty}} \otimes V_k)$ uniquely determines, by restriction to $\mathfrak{g}$ along~\eqref{realform}, an element of $\Hom_{\mathfrak{l}}(\Lambda^{2}(\mathfrak{g} / \mathfrak{l}), I_{\phi_{\infty}} \otimes V_k)$. Hence, to accomplish our task we will construct appropriate  $\BC$-linear morphisms
$\Lambda^{2}(\mathfrak{g} / \mathfrak{l})_{\BC} \rightarrow I_{\phi_{\infty}} \otimes V_k$ 
 which are $\mathfrak{l}_{\BC}$-equivariant, and study their properties with respect to the $\BC$-linear extension of the differential of Definition~\ref{Liecomplex}.  

\begin{defi} Consider the generators of $\mathfrak{p}_{\BC}$ defined in~\eqref{gen_p}.  For $i<j \in \{1,2,3,4 \}$, we define $X_{ij}:=X_i \wedge X_j$. 
\end{defi}

\begin{pro}
By looking at Table~\ref{l_on_p}, we see that the  generators of $\mathfrak{l}_{\BC}$ act on the elements $X_{ij}$ of the $\BC$-basis of $\Lambda^2(\mathfrak{g} / \mathfrak{l})_{\BC}=\Lambda^2(\mathfrak{p}_{\BC})$ defined above according to  Table~\ref{l_on_p^2}.

The decomposition~\eqref{+-} induces a bigrading on $\Lambda^2(\mathfrak{p}_{\BC})$, defined by 
\begin{equation} \label{bigrading}
\Lambda^{p,q}(\mathfrak{p}_{\BC}):=\Lambda^p \mathfrak{p}^+ \oplus \Lambda^q \mathfrak{p}^-
\end{equation}
for $p+q=2$, which is preserved by the $\mathfrak{l}_{\BC}$-action.
\vspace{5pt}
{\begin{center}
\begin{table} [htbp]
\caption{Action of $\mathfrak{l}_{\BC}$ on $\Lambda^2(\mathfrak{p}_{\BC})$=$\Lambda^2(\mathfrak{g} / \mathfrak{l}) _{\BC}$}\label{l_on_p^2}
\renewcommand{\arraystretch}{2}
\begin{tabular}{ | c | c | c | c | c | }
 \hline
 & $U_0$ & $U_1+\iu U_2$ & $U_1-\iu U_2$ & $U_3$ \\
\hline
$X_{12}$ & $3 \iu X_{12}$ & $0$ & $0$ & $0$ \\
$X_{23}$ & $0$ & $\iu (X_{13}-X_{24})$ & $0$ & $-\iu X_{23}$ \\
$X_{34}$ & $-3\iu X_{34}$ & $0$ & $0$ & $0$\\
$X_{13}$ & $0$ & $-\iu X_{14}$ & $\iu X_{23}$ & $0$ \\
$X_{14}$ & $0$ & $0$ & $-\iu (X_{13}-X_{24})$ & $\iu X_{14}$ \\
$X_{24}$ & $0$ & $\iu X_{14}$ & $-\iu X_{23}$ & $0$\\
\hline
\end{tabular}
\end{table}
\end{center}
}
\end{pro}
\begin{notation}
To simplify notation, from now on in this subsection the action of an element $x \in \mathfrak{g}$ on a function $f \in I_{\phi_{\infty}}$ will be denoted $x \cdot f$.
\end{notation}

\begin{defi} For $l \in \{-1, 0, \dots, k+1 \}$, $W^k_{l} := W^{\frac{k}{2}+1,-\frac{k}{2}}_{-\frac{k}{2}+l, \frac{k}{2}+1}$.
\end{defi} \label{wig_k}
These functions belong indeed to $I_{\phi_{\infty}}$ since their parameters verify the conditions~\eqref{para1},~\eqref{para2}. We will use them to define a first candidate generator for our cohomology space. 

\begin{defi} $\,$
\begin{enumerate}
\item For $l \in \{0, \dots, k \}$, let us define $\alpha_{l}:=\frac{k-l+1}{k+1} \sqrt{l+1} \sqrt{\binom{k+1}{l}} .$
\item Define 
\begin{align*}
& w_{13}^k:= \sum\limits_{l=0}^{k} \alpha_l W^k_l \otimes x^{k-l}y^l , & w^k_{23}:=-\iu \cdot (U_1-\iu U_2) \cdot w^k_{13},\\
& w_{24}^k:=-w_{13}^k, & w^k_{14}:=\iu \cdot (U_1+ \iu U_2) \cdot w^k_{13}.
\end{align*}

\item Denote by $\psi^k \in \Hom(\Lambda^{2}(\mathfrak{g} / \mathfrak{l})_{\BC}, I_{\phi_{\infty}} \otimes V_k)$
the morphism defined on generators by
\begin{equation} \label{gen}
X_{12} \mapsto 0, X_{34} \mapsto 0, X_{ij} \mapsto w^k_{ij} \ \mbox{for} \ (i,j) \notin \{(1,2), (3,4) \}
\end{equation}
\end{enumerate}
\end{defi}

\begin{rmk} \label{unique}
At this point, we do not know yet whether $\psi^k$ is $\mathfrak{l}_{\BC}$-equivariant. Nevertheless, once we choose the image of $X_{13}$ under $\psi^k$, the choice of the images of $X_{24}, X_{23}, X_{14}$ is uniquely determined by the requirement that $\psi^k$ be $\mathfrak{l}_{\BC}$-equivariant.

\end{rmk}

It will be helpful to have a second candidate generator for our cohomology space. 
\begin{defi} Consider, for $l \in \{0, \dots, k \}$, $W^k_{0,l} := W^{\frac{k}{2},-\frac{k}{2}-3}_{-\frac{k}{2}+l, \frac{k}{2}}$.
\end{defi} \label{wig_0,k}
These functions belong indeed to $I_{\phi_{\infty}}$ since their parameters verify the conditions~\eqref{para1},~\eqref{para2}. 

\begin{defi}
For $l \in \{ 0, \dots, k \}$, take $\beta_l:=\sqrt{\binom{k}{l}}$. Then $$\psi^k_0 \in \Hom(\Lambda^2 (\mathfrak{g} / \mathfrak{l})_{\BC}, I_{\phi_{\infty}} \otimes V_k)$$ can be defined
as the morphism 
\[
X_{34} \mapsto w_{0,34}^k := \sum\limits_{l=0}^k \beta_l W^k_{0,l} \otimes x^{k-l}y^l\,, \quad 
X_{ij} \mapsto 0 \ \ \forall \ (i,j) \neq (3,4)\,.
\]
\end{defi}

We will prove at once the $\mathfrak{l}_{\BC}$-equivariance of $\psi^k$ and $\psi_0^k$, and the fact that they are closed and non-exact, by considering some specific elements in the degree-1 part of the complex computing Lie algebra cohomology. 

\begin{defi} 
For $l \in \{0, \dots, k+1 \}$,  $
W^k_{\chi,l} := W^{\frac{k}{2}+\frac{1}{2},-\frac{k}{2}-\frac{3}{2}}_{-\frac{k}{2}-\frac{1}{2}+l, \frac{k}{2}+\frac{1}{2}}$.
\end{defi}
These functions belong indeed to $I_{\phi_{\infty}}$ since their parameters verify the conditions \eqref{para1}, \eqref{para2}.

\begin{defi}
For $l \in \{0, \dots, k \}$, take $\gamma_l := \sqrt{\frac{k+1-l}{k+1}} \sqrt{\binom{k}{l}}$. Then define $\chi^k \in \Hom((\mathfrak{g} / \mathfrak{l})_{\BC}, I_{\phi_{\infty}} \otimes V_k)$ as the morphism $X_1 \mapsto 0, \ \ \ X_2 \mapsto 0$,
\[
X_3 \mapsto \chi^k_3:= \sum\limits_{l=0}^k \gamma_l W^k_{\chi,l} \otimes x^{k-l}y^l,\ \ \ X_4 \mapsto \chi^k_4:= \iu (U_1 + \iu U_2) \chi^k_3\,.
\]
\end{defi}

\begin{lemma} \label{chi_equiv}
The morphism $\chi^k$
is $\mathfrak{l}_{\BC}$-equivariant, hence defines an element 
\[
\chi^k \in \Hom_{\mathfrak{l}_{\BC}}((\mathfrak{g} / \mathfrak{l})_{\BC}, I_{\phi_{\infty}} \otimes V_k) 
\]
\end{lemma}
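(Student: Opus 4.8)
The plan is to verify $\mathfrak{l}_{\BC}$-equivariance of $\chi^k$ directly, checking that for each generator $U$ of $\mathfrak{l}_{\BC}$ (namely $U_0$, $U_3$, $U_1+\iu U_2$, $U_1-\iu U_2$) and each basis element $X_i$ of $(\mathfrak{g}/\mathfrak{l})_{\BC}$ one has $U\cdot \chi^k(X_i) = \chi^k(U\cdot X_i)$, where on the left $U$ acts on $I_{\phi_\infty}\otimes V_k$ (via $\texttt{dl}$ on the first factor by Lemma~\ref{l-act} and via~\eqref{act_poly} on the second) and on the right $U$ acts on $\Lambda^1(\mathfrak{p}_{\BC})=\mathfrak{p}_{\BC}$ as recorded in Table~\ref{l_on_p}. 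Since $\chi^k$ kills $X_1$ and $X_2$, and since Table~\ref{l_on_p} shows that the $\mathfrak{l}_{\BC}$-action on $X_3,X_4$ stays within the span of $X_3,X_4$, it suffices to treat the four identities involving $X_3$ and $X_4$.

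First I would record the key weight data: each summand $W^k_{\chi,l}\otimes x^{k-l}y^l$ of $\chi^k_3$ is, by Lemma~\ref{l-act} and~\eqref{act_poly}, an eigenvector for $U_0$ with eigenvalue $\iu(-\tfrac{k}{2}-\tfrac32) + \iu\tfrac{k}{2} = -\tfrac{3\iu}{2}$ and for $U_3$ with eigenvalue $\iu(-\tfrac{k}{2}-\tfrac12+l) + \iu\tfrac{k-2l}{2} = -\tfrac{\iu}{2}$; these match exactly the entries $-\tfrac{3\iu}{2}X_3$ and $-\tfrac{\iu}{2}X_3$ in Table~\ref{l_on_p}, so the $U_0$- and $U_3$-equivariances on $X_3$ are immediate. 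For $X_4$, one uses the definition $\chi^k_4 = \iu(U_1+\iu U_2)\chi^k_3$ together with the fact that $U_0$ and $U_3$ commute appropriately with $U_1+\iu U_2$ up to the scalars dictated by the table (i.e. $[U_0, U_1+\iu U_2]=0$ and $[U_3,U_1+\iu U_2]=\iu(U_1+\iu U_2)$ in $\mathfrak{l}_{\BC}$), which forces $\chi^k_4$ to have the $U_0$-weight $-\tfrac{3\iu}{2}$ and $U_3$-weight $+\tfrac{\iu}{2}$ demanded by Table~\ref{l_on_p}; alternatively this is checked summand by summand using Lemma~\ref{l-act} and~\eqref{act_poly} after expanding $\chi^k_4$.

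The two remaining identities — equivariance under $U_1+\iu U_2$ and under $U_1-\iu U_2$ — are the substantive ones. For $U_1+\iu U_2$: on the representation side, Table~\ref{l_on_p} gives $(U_1+\iu U_2)\cdot X_3 = -\iu X_4$ and $(U_1+\iu U_2)\cdot X_4 = 0$. The first reads $(U_1+\iu U_2)\cdot\chi^k_3 = -\iu\,\chi^k_4$, which holds by the very definition $\chi^k_4 = \iu(U_1+\iu U_2)\chi^k_3$. The second, $(U_1+\iu U_2)\cdot\chi^k_4 = 0$, amounts to $(U_1+\iu U_2)^2\cdot\chi^k_3 = 0$; this I would prove by expanding $(U_1+\iu U_2)^2$ acting on each $W^k_{\chi,l}\otimes x^{k-l}y^l$ using the raising formulas of Lemma~\ref{l-act} (which raise $m_1$ by $1$ with coefficient $-\iu\sqrt{(j-m_1)(j+m_1+1)}$) and~\eqref{act_poly} (which sends $x^{k-l}y^l\mapsto \iu l\, x^{k-l+1}y^{l-1}$), and checking that the coefficients $\gamma_l = \sqrt{\tfrac{k+1-l}{k+1}}\sqrt{\binom{k}{l}}$ are precisely tuned so that after collapsing onto a common Wigner-function/monomial pair the total coefficient telescopes to zero — this is where the specific choice of $\gamma_l$ earns its keep. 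For $U_1-\iu U_2$: Table~\ref{l_on_p} gives $(U_1-\iu U_2)\cdot X_3 = 0$ and $(U_1-\iu U_2)\cdot X_4 = -\iu(X_{13}-X_{24})$ — wait, that is the degree-$2$ table; in degree $1$ the relevant entries are $(U_1-\iu U_2)\cdot X_3 = \iu X_4$? No: reading Table~\ref{l_on_p}, $(U_1-\iu U_2)\cdot X_3 = 0$ and $(U_1-\iu U_2)\cdot X_4 = -\iu X_3$. So I must check $(U_1-\iu U_2)\cdot\chi^k_3 = 0$ and $(U_1-\iu U_2)\cdot\chi^k_4 = -\iu\,\chi^k_3$. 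The first holds because $\chi^k_3$ is built from $W^{j,n}_{m_1,m_2}$ with $m_1$ running up to its maximal value $j=\tfrac{k+1}{2}$ only in the top summand, and the lowering operator applied to the whole sum vanishes by the same telescoping as above (again forced by the $\gamma_l$); the second is then a consequence of the first together with the bracket relation $[U_1-\iu U_2, U_1+\iu U_2] = c\,U_3$ for the appropriate scalar $c$ and the already-established $U_3$-weight of $\chi^k_3$, since $(U_1-\iu U_2)\chi^k_4 = \iu(U_1-\iu U_2)(U_1+\iu U_2)\chi^k_3 = \iu\big([U_1-\iu U_2,U_1+\iu U_2] + (U_1+\iu U_2)(U_1-\iu U_2)\big)\chi^k_3$, and the second term dies.

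The main obstacle I anticipate is the bookkeeping in these last two telescoping cancellations: one must correctly combine the square-root coefficients coming from Lemma~\ref{l-act} with the integer coefficients from~\eqref{act_poly} and verify that, after re-indexing so that all terms are expressed in the same basis vectors $W^{j,n}_{m_1,m_2}\otimes x^{k-l}y^l$, the resulting sum of products of square roots and binomial-coefficient square roots is identically zero. This is elementary but delicate, and the definition of $\gamma_l$ with the factor $\sqrt{\tfrac{k+1-l}{k+1}}$ is evidently reverse-engineered exactly for this purpose, so the proof is really the verification that it works. Once all four generator-identities are checked, $\mathfrak{l}_{\BC}$-equivariance on the spanning set $\{X_1,X_2,X_3,X_4\}$ of $(\mathfrak{g}/\mathfrak{l})_{\BC}$ follows by $\BC$-linearity, giving the asserted membership $\chi^k\in\Hom_{\mathfrak{l}_{\BC}}((\mathfrak{g}/\mathfrak{l})_{\BC}, I_{\phi_\infty}\otimes V_k)$.
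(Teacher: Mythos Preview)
Your plan is correct and follows essentially the same route as the paper: verify the $U_0$- and $U_3$-weights of $\chi^k_3$ directly from Lemma~\ref{l-act} and~\eqref{act_poly}, then establish the three identities $(U_1-\iu U_2)\chi^k_3=0$, $(U_1+\iu U_2)^2\chi^k_3=0$, and $(U_1-\iu U_2)(U_1+\iu U_2)\chi^k_3=-\iu\chi^k_3$ by the telescoping cancellations forced by the choice of $\gamma_l$. The one genuine variation is your last step: where the paper simply writes ``a similar computation'' for $(U_1-\iu U_2)\chi^k_4=-\iu\chi^k_3$, you instead deduce it from $[U_1-\iu U_2,\,U_1+\iu U_2]=-2\iu U_3$ together with the already-known $U_3$-weight $-\tfrac{\iu}{2}$ of $\chi^k_3$ and the vanishing $(U_1-\iu U_2)\chi^k_3=0$; this is a clean shortcut that avoids a third explicit telescoping check. (You should also state, for completeness, that $\mathfrak{l}_{\BC}$ preserves the span of $X_1,X_2$ as well, so that equivariance on those generators is automatic --- you only wrote the corresponding statement for $X_3,X_4$.)
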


\begin{proof}
In view of Table \ref{l_on_p}, in order to show $\mathfrak{l}_{\BC}$-equivariance of $\chi^k$, first we have to check that $ U_0 \cdot \chi_3^k = -\frac{3 \iu}{2} \chi_3^k, \ \ \ U_3 \cdot \chi_3^k = -\frac{\iu}{2} \chi_3^k$.
But these equalities follow from the fact that, by Lemma~\ref{l-act} and by~\eqref{act_poly}, 
\begin{align*}
& U_0 \cdot W_{\chi,l}^k = -\iu \left(\frac{k}{2}+\frac{3}{2}\right) W_{\chi,l}^k, & U_0 \cdot x^{k-l}y^l = \iu \frac{k}{2} x^{k-l}y^l,\\
& U_3 \cdot W_{\chi,l}^k = \iu \left(-\frac{k}{2}-\frac{1}{2}+l\right) W_{\chi,l}^k, & U_3 \cdot x^{k-l}y^l = \iu \left(\frac{k}{2}-l\right) x^{k-l}y^l\,.
\end{align*}

To complete the proof, Table~\ref{l_on_p} tells us that we need to show that 
\begin{align*}
& (U_1-\iu U_2) \cdot \chi^k_3 =0\\
& (U_1 + \iu U_2) \cdot ((U_1 + \iu U_2) \cdot \chi^k_3) = 0\\
&(U_1 - \iu U_2) \cdot ((U_1 + \iu U_2) \cdot \chi^k_3) = -\iu \chi^k_3 
\end{align*}

To compute the effect of $U_1 - \iu U_2$, observe that by Lemma~\ref{l-act}, for all $l \in \{ 0, \dots, k \}$
\[
(U_1-\iu U_2) \cdot W^k_{\chi,l}= -\iu \sqrt{l} \sqrt{k+2-l} W^k_{\chi,l-1}
\]
and that by~\eqref{act_poly}
\begin{align*}
& (U_1-\iu U_2) \cdot x^{k-l}y^l=\iu(k-l)x^{k-l-1}y^{l+1} \ \forall l \in \{ 0, \dots, k-1 \}\\
& (U_1-\iu U_2) \cdot y^k=0
\end{align*}
so that 
\begin{align*}
(U_1-\iu U_2) \cdot \chi^k_3 &= (U_1-\iu U_2) \cdot \sum\limits_{l=0}^k \gamma_l  W^k_{\chi,l} \otimes x^{k-l}y^l \\
& = -\iu[\sum\limits_{l=1}^k \gamma_l \sqrt{l} \sqrt{k+2-l} W^k_{\chi,l-1} \otimes x^{k-l}y^l - \sum\limits_{l=0}^{k-1} \gamma_l (k-l) W^k_{\chi,l} \otimes x^{k-l-1}y^{l+1}]\\
& = -\iu[\sum\limits_{l=1}^k (\gamma_l \sqrt{l} \sqrt{k+2-l}-\gamma_{l-1}(k-l+1)) W^k_{\chi,l-1} \otimes x^{k-l}y^l]
\end{align*}

which is indeed equal to zero since by definition, for all $l \in \{ 1, \dots, k \}$, we have $\gamma_l = \frac{k-l+1}{\sqrt{l} \sqrt{k-l+2}} \gamma_{l-1}.$ Using the fact that by Lemma~\ref{l-act},
for all $l \in \{ 1, \dots, k \}$
\[
(U_1+\iu U_2) \cdot W^{k}_{\chi,l}= -\iu \sqrt{k+1-l} \sqrt{l+1}  W^k_{\chi,l+1}
\]
and that by~\eqref{act_poly}
\begin{align*}
& (U_1+\iu U_2) \cdot x^k=0\\
& (U_1+\iu U_2) \cdot x^{k-l}y^l=\iu l x^{k-l+1}y^{l-1} \ \forall l \in \{ 1, \dots, k \}
\end{align*}

we see that 
\[
\chi^k_4= \iu (U_1 + \iu U_2) \cdot \chi^k_3 =\frac{\sqrt{k+1}}{k+1} W^k_{\chi,1} \otimes x^k + \sum\limits_{l=1}^{k-1} \gamma_{l+1} \frac{l+1}{k-l} W^k_{\chi,l+1} \otimes x^{k-l}y^l + W^k_{\chi,k+1} \otimes y^k
\]
Hence, 
\[
(U_1 + \iu U_2) \cdot ((U_1 + \iu U_2) \cdot \chi^k_3)=\left(-\frac{\sqrt{k+1}}{k+1} \sqrt{2k} + \frac{2}{k-1}\gamma_2\right) W^k_{\chi,2} \otimes x^k + 
\]
\[
+ \sum\limits_{l=2}^{k-1} \left(\frac{l(l+1)}{k-l} \gamma_{l+1}-l\sqrt{\frac{l+1}{k+1-l}} \gamma_l\right) W^k_{\chi,l+1} \otimes x^{k-l+1}y^{l-1} + 
\]
\[
+ \left(-k\sqrt{k+1}\gamma_k+k\right)W^k_{\chi,k+1} \otimes xy^{k-1}
\]
which is equal to $0$ since by definition
\begin{align*}
 &\gamma_2=(k-1) \frac{\sqrt{k}}{\sqrt{2}\sqrt{k+1}}\\
&\gamma_{l+1} = \frac{k-l}{\sqrt{l+1} \sqrt{k-l+1}} \gamma_{l} \ \ \ \forall l \in \{ 2, \dots, k-1 \}\\
& \gamma_k=\frac{1}{\sqrt{k+1}}
\end{align*}

A similar computation shows that $
(U_1 - \iu U_2) \cdot ((U_1 + \iu U_2) \cdot \chi^k_3)=-\iu \chi^k_3$ as desired, thus concluding the proof.
\end{proof}

Now, we want to study the effect of the differential $\rm{d}_1$, which we will denote by $\rm{d}$, on $\chi^k$. Given the expression of $\chi^k$, we need a preliminary lemma. 

\begin{lemma} \label{lie_wigner}
Recall the Wigner $D$-functions $W^k_l$ and $W^k_{0,l}$ from Definitions \ref{wig_k} and \ref{wig_0,k}. For all $l \in \{0, \dots, k+1 \}$, define
\[
\widetilde{W}^k_{0,l} := W^{\frac{k}{2}+1,-\frac{k}{2}-3}_{-\frac{k}{2}+l, \frac{k}{2}}
\]
Then the action of the elements $X_1, X_3, X_4$ of the Lie algebra $(\mathfrak{g} / \mathfrak{l})_{\BC}$ on 
\[
W^k_{\chi,l} = W^{\frac{k}{2}+\frac{1}{2},-\frac{k}{2}-\frac{3}{2}}_{-\frac{k}{2}-\frac{1}{2}+l, \frac{k}{2}+\frac{1}{2}}
\]
(defined for $l \in \{0, \dots, k \}$) is given as follows: 
\begin{align*}
 & X_1 \cdot W^k_{\chi,l} = \sqrt{\frac{l+1}{k+2}} W^k_l \ \ \forall l \in \{0, \dots, k \}\\
& X_3 \cdot W^k_{\chi,l} = \frac{\sqrt{l}\sqrt{k+1}}{k+2} W^k_{0,l-1} + \frac{k+3}{k+2}\sqrt{k+2-l} \widetilde{W}^k_{0,l-1} \ \ \forall l \in \{1, \dots, k+1 \}\\
& X_4 \cdot W^k_{\chi,l} = -\frac{\sqrt{k+1-l}\sqrt{k+1}}{k+2} W^k_{0,l} + \frac{k+3}{k+2} \sqrt{l+1} \widetilde{W}^k_{0,l} \ \ \forall l \in \{0, \dots, k \}
\end{align*}
\end{lemma}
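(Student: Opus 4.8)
The statement is a direct computation: apply the explicit formulae of Theorem~\ref{g-action} for $\texttt{dl}(X_1), \texttt{dl}(X_3), \texttt{dl}(X_4)$ to the specific Wigner $D$-function $W^k_{\chi,l}$, whose parameters are $j=\frac{k}{2}+\frac{1}{2}$, $n=-\frac{k}{2}-\frac{3}{2}$, $m_1=-\frac{k}{2}-\frac{1}{2}+l$, $m_2=\frac{k}{2}+\frac{1}{2}$, and then match the resulting two-term expressions (coming from the $j\mapsto j-\frac{1}{2}$ and $j\mapsto j+\frac{1}{2}$ summands) with the functions $W^k_l$, $W^k_{0,l}$ and $\widetilde{W}^k_{0,l}$ defined in Definitions~\ref{wig_k},~\ref{wig_0,k} and in the statement. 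First I would substitute these parameter values into the four radicals and the two linear factors $(2j+m_2-n-1)$, $(2j-m_2+n+3)$, etc., appearing in each formula of Theorem~\ref{g-action}. The key simplification is that, for these parameters, $j+m_2 = k+1$ and $j-m_2 = -l$ (so $j-m_2+1 = l+1-k$, etc.), which makes many factors collapse: in particular, for $\texttt{dl}(X_1)$ one of the two radicals contains $\sqrt{j-m_2}=\sqrt{-l}$, but the accompanying linear factor $(2j+m_2-n-1)$ vanishes when one checks $2j = k+1$, $m_2 = \frac{k}{2}+\frac{1}{2}$, $n = -\frac{k}{2}-\frac{3}{2}$, so that $2j+m_2-n-1 = (k+1)+(\frac{k}{2}+\frac{1}{2})+(\frac{k}{2}+\frac{3}{2})-1 = 2k+3 \neq 0$ — so in fact it is the radical that must be handled by noticing the Wigner function $W^{j-\frac12,\,n+\frac32}_{m_1+\frac12,\,m_2+\frac12}$ has second lower index $m_2+\frac12 = \frac{k}{2}+1$ exceeding $j-\frac12 = \frac{k}{2}$, hence is identically zero; only the $j+\frac{1}{2}$ term survives, giving the single term $\sqrt{\tfrac{l+1}{k+2}}\,W^k_l$ after simplifying $\frac{1}{2(2j+1)}\sqrt{(j+m_1+1)(j+m_2+1)}(2j-m_2+n+3)$.

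For $\texttt{dl}(X_3)$ and $\texttt{dl}(X_4)$, by contrast, \emph{both} the $j-\frac12$ and $j+\frac12$ summands survive (the lower indices now decrease, so no vanishing occurs), which is why each of those two formulae produces a genuine two-term answer; the $j-\frac{1}{2}$ piece has parameters $(\frac{k}{2}, -\frac{k}{2}-3)$ matching $W^k_{0,\bullet}$ and the $j+\frac{1}{2}$ piece has parameters $(\frac{k}{2}+1, -\frac{k}{2}-3)$ matching $\widetilde{W}^k_{0,\bullet}$. I would then just evaluate the four radicals and the linear factor in each of the two surviving summands of $\texttt{dl}(X_3)$ and of $\texttt{dl}(X_4)$ at the given parameters, pull out the common $\frac{1}{2(2j+1)} = \frac{1}{2(k+2)}$, and check that the coefficients reduce to $\frac{\sqrt{l}\sqrt{k+1}}{k+2}$, $\frac{k+3}{k+2}\sqrt{k+2-l}$ (for $X_3$) and $-\frac{\sqrt{k+1-l}\sqrt{k+1}}{k+2}$, $\frac{k+3}{k+2}\sqrt{l+1}$ (for $X_4$). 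The sign bookkeeping in the $X_3$ formula — the leading $-$ in front of the first bracketed term — needs a little care, as does the ranges of $l$ for which each identity is asserted (for $X_3$ the index shifts down, so $l$ runs in $\{1,\dots,k+1\}$ and the function $W^k_{0,l-1}$ is only nonzero for $l-1 \le k$).

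The main obstacle is purely bookkeeping: keeping track of which of the two terms in each formula of Theorem~\ref{g-action} is genuinely nonzero, i.e.\ checking for each of $X_1, X_3, X_4$ whether the resulting lower index $m_2 \pm \frac12$ stays within $[-(j\pm\frac12), j\pm\frac12]$ and whether the coupled linear factor happens to vanish; and then correctly normalizing the resulting Wigner functions — recall they carry the normalizing constants $c^j_{m_1} c^j_{m_2}$ in their very definition, so that passing from a function with parameters $(j,n,m_1,m_2)$ to one with shifted parameters introduces ratios of factorials $\sqrt{(j'+m_i')!(j'-m_i')!}/\sqrt{(j+m_i)!(j-m_i)!}$ which must be absorbed into the radicals appearing in Theorem~\ref{g-action}. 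Once those normalization ratios are written out and combined with the radicals of Theorem~\ref{g-action}, the stated closed forms should drop out after elementary algebra; I would present the $X_1$ case in full and indicate that $X_3$ and $X_4$ are entirely analogous.
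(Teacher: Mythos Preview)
Your approach is correct and coincides with the paper's, whose proof is literally one line: ``It follows by direct computation from Theorem~\ref{g-action}.'' Two small corrections to your bookkeeping: with the given parameters one has $j-m_2=0$ (not $-l$), so the first summand in the $X_1$ formula vanishes simply because the radical $\sqrt{(j-m_1)(j-m_2)}$ is zero; and your final paragraph about normalization ratios is misplaced --- the formulae of Theorem~\ref{g-action} are already stated in terms of the functions $W^{j,n}_{m_1,m_2}$ \emph{as defined} (with the constants $c^j_{m}$ built in), so you just substitute parameters and simplify, with no further rescaling.
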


\begin{proof}
It follows by direct computation from Theorem~\ref{g-action}.
\end{proof}

We are now ready to formulate and prove our main theorem. 

\begin{thm} \label{mainthm}
Write $\rm{d}$ for both $\rm{d}_1$ and $\rm{d}_2$. Then, the following hold.
\begin{enumerate} [label=(\roman*)] 
\item We have $\rm{d} \chi^k = \frac{1}{\sqrt{k+2}} \psi^k + \psi^k_0$.
\item We have $\rm{d} \psi^k_0 = 0 = \rm{d} \psi^k$.
\item The elements $\psi^k$ and $\psi^k_0$ do not belong to the image of $\rm{d}$.
\item Each one of the elements $\psi^k$ and $\psi^k_0$ defines a generator of $H^2(\mathfrak{g}, \mathfrak{l}, I_{\phi_{\infty}} \otimes V_k)$. The element $\psi^k$ is supported on $\Lambda^{1,1}(\mathfrak{p}_{\BC})$ and the element $\psi_0^k$ is supported on $\Lambda^{0,2}(\mathfrak{p}_{\BC})$ (as defined in~\eqref{bigrading}). 

\end{enumerate}
\end{thm}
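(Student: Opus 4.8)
The plan is to verify the four assertions essentially by the explicit linear-algebra and representation-theoretic computations that the preceding lemmas have set up, and then to invoke Corollary~\ref{1dim} to conclude. Concretely, I would proceed as follows.

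\textbf{Step 1: $\mathfrak{l}_{\BC}$-equivariance of $\psi^k$ and $\psi^k_0$.} First I would observe that $\chi^k$ is $\mathfrak{l}_{\BC}$-equivariant by Lemma~\ref{chi_equiv}, hence $\mathrm{d}\chi^k$ is automatically an element of $\Hom_{\mathfrak{l}_{\BC}}(\Lambda^2(\mathfrak{g}/\mathfrak{l})_{\BC}, I_{\phi_{\infty}}\otimes V_k)$, since the Chevalley--Eilenberg differential is $\mathfrak{l}$-equivariant. So establishing assertion~(i) simultaneously gives the $\mathfrak{l}_{\BC}$-equivariance of $\psi^k + \sqrt{k+2}\,\psi^k_0$. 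To separate the two pieces, I would remark that $\psi^k$ is supported on $\Lambda^{1,1}(\mathfrak{p}_{\BC})$ (it kills $X_{12}$ and $X_{34}$, and its other values lie in the $U_0$-weight-$0$ part, matching the bigrading) while $\psi^k_0$ is supported on $\Lambda^{0,2}(\mathfrak{p}_{\BC})=\BC X_{34}$. Since by the Proposition these summands of $\Lambda^2(\mathfrak{p}_{\BC})$ are $\mathfrak{l}_{\BC}$-stable, and since $\mathrm{d}\chi^k$ is $\mathfrak{l}_{\BC}$-equivariant, each homogeneous component of $\mathrm{d}\chi^k$ with respect to this bigrading is itself $\mathfrak{l}_{\BC}$-equivariant; hence both $\psi^k$ and $\psi^k_0$ are equivariant. (Remark~\ref{unique} already told us that the values of $\psi^k$ on $X_{24},X_{23},X_{14}$ are forced once we know $\psi^k(X_{13})$, so it suffices to check equivariance for the one generator $X_{13}$, i.e.\ the three $U_0,U_3$-weight conditions plus the raising/lowering relations, exactly as in the proof of Lemma~\ref{chi_equiv}.)

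\textbf{Step 2: the identity $\mathrm{d}\chi^k = \tfrac{1}{\sqrt{k+2}}\psi^k + \psi^k_0$ (assertion (i)).} By the formula~\eqref{diff} for $\mathrm{d}_1$, for $i<j$ one has $(\mathrm{d}\chi^k)(X_i\wedge X_j) = X_i\cdot\chi^k(X_j) - X_j\cdot\chi^k(X_i) + \chi^k([X_i,X_j])$; the bracket term vanishes modulo $\mathfrak{l}_{\BC}$ by~\eqref{zero_bracket}, so this reduces to $X_i\cdot\chi^k_j - X_j\cdot\chi^k_i$ with the convention $\chi^k_1=\chi^k_2=0$. I would then compute each of the six values. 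For $X_{12}$ both terms vanish; for $X_{34}$ the value is $X_3\cdot\chi^k_4 - X_4\cdot\chi^k_3$, which using Lemma~\ref{lie_wigner} together with~\eqref{g_act_poly} (the action of $X_3,X_4$ on the polynomial factors via multiplication by $z$) should collapse, after the telescoping of the $\widetilde W^k_{0,l}$-terms, to $\beta_l$-weighted combinations of $W^k_{0,l}\otimes x^{k-l}y^l$, i.e.\ to $w^k_{0,34}=\psi^k_0(X_{34})$; for $X_{13}$ the value is $X_1\cdot\chi^k_3$, and by the first line of Lemma~\ref{lie_wigner} this equals $\sum_l \gamma_l\sqrt{\tfrac{l+1}{k+2}}\,W^k_l\otimes x^{k-l}y^l = \tfrac{1}{\sqrt{k+2}}\sum_l\alpha_l W^k_l\otimes x^{k-l}y^l = \tfrac{1}{\sqrt{k+2}}w^k_{13}$, using the definitions of $\alpha_l$ and $\gamma_l$ (note $\alpha_l = \sqrt{k+2}\cdot\gamma_l\sqrt{(l+1)/(k+2)}$ should check out numerically). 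The remaining values on $X_{14},X_{23},X_{24}$ are then determined by $\mathfrak{l}_{\BC}$-equivariance on both sides, so they agree automatically with $\tfrac{1}{\sqrt{k+2}}w^k_{ij}$; alternatively one computes them directly from Lemma~\ref{lie_wigner} and checks they match $\tfrac{1}{\sqrt{k+2}}w^k_{14}, \tfrac{1}{\sqrt{k+2}}w^k_{23}, -\tfrac{1}{\sqrt{k+2}}w^k_{13}$.

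\textbf{Step 3: closedness (assertion (ii)) and non-exactness (assertion (iii)), and conclusion (assertion (iv)).} For closedness, since $\mathrm{d}^2=0$ we get $\mathrm{d}\psi^k + \sqrt{k+2}\,\mathrm{d}\psi^k_0 = \sqrt{k+2}\,\mathrm{d}(\mathrm{d}\chi^k) = 0$; but $\mathrm{d}\psi^k$ lands in the $\Lambda^2\mathfrak{p}^+\otimes\Lambda^1\mathfrak{p}^-$-type part of degree $3$ and $\mathrm{d}\psi^k_0$ in a different type part (the bigrading is preserved by $\mathrm{d}$ up to the obvious shift, because the differential is built from the $\mathfrak{p}^{\pm}$-actions which, by~\eqref{g_act_poly} and the structure of $I_{\phi_{\infty}}$, respect the grading), so each must vanish separately. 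For non-exactness: any exact element of $\Hom_{\mathfrak{l}_{\BC}}(\Lambda^2,-)$ is $\mathrm{d}\eta$ for some $\mathfrak{l}_{\BC}$-equivariant $\eta\in\Hom_{\mathfrak{l}_{\BC}}((\mathfrak{g}/\mathfrak{l})_{\BC}, I_{\phi_{\infty}}\otimes V_k)$; I would argue that the space of such $\eta$ is spanned, up to scalar, by $\chi^k$ (one can see this by analysing the $K_{\infty}$-types, or by the same weight bookkeeping used to build $\chi^k$), so every exact element is a scalar multiple of $\mathrm{d}\chi^k = \tfrac{1}{\sqrt{k+2}}\psi^k + \psi^k_0$; since $\psi^k$ alone and $\psi^k_0$ alone are not proportional to this sum (they are supported on complementary parts of the bigrading), neither is exact. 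Finally, by Corollary~\ref{1dim}, $H^2(\mathfrak{g},\mathfrak{l}, I_{\phi_{\infty}}\otimes V_k)$ is $1$-dimensional, and a closed non-exact element is automatically a generator; the support statements were already recorded in Step~1. This completes assertion~(iv).

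\textbf{Main obstacle.} The delicate point is Step~2, the verification of the identity $\mathrm{d}\chi^k = \tfrac{1}{\sqrt{k+2}}\psi^k + \psi^k_0$ on the generator $X_{34}$: here one has to run the action of $X_3$ and $X_4$ (via Lemma~\ref{lie_wigner} on the Wigner part and~\eqref{g_act_poly} on the polynomial part) through the sums defining $\chi^k_3$ and $\chi^k_4$, and see the auxiliary functions $\widetilde W^k_{0,l}$ cancel while the $W^k_{0,l}$-terms assemble with exactly the coefficients $\beta_l=\sqrt{\binom{k}{l}}$. Getting the combinatorial identities among $\gamma_l$, the shift coefficients $\sqrt{l},\sqrt{k+1-l},\sqrt{l+1},\sqrt{k+2-l}$, and $\beta_l$ to line up is the only genuinely computational hurdle; everything else is forced by equivariance and by the one-dimensionality from Corollary~\ref{1dim}.
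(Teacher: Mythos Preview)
Your Steps~1 and~2 match the paper's approach: the paper also reduces the identity in~(i) to the explicit evaluations on $X_{12},X_{34},X_{13}$ via Lemma~\ref{lie_wigner} and~\eqref{g_act_poly}, using Remark~\ref{unique} to avoid repeating the computation on the remaining $X_{ij}$, and deduces $\mathfrak{l}_{\BC}$-equivariance of $\psi^k,\psi^k_0$ from that of $\mathrm{d}\chi^k$ together with stability of the bigrading under $\mathfrak{l}_{\BC}$.

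There is, however, a genuine gap in your Step~3 argument for closedness. The Chevalley--Eilenberg differential here has bidegree $(1,0)+(0,1)$ with respect to the bigrading on $\Lambda^{\bullet}\mathfrak{p}_{\BC}$: an element supported on $\Lambda^{a,b}$ is sent to something supported on $\Lambda^{a+1,b}\oplus\Lambda^{a,b+1}$. Since $\psi^k$ lives in type $(1,1)$, $\mathrm{d}\psi^k$ can have components in \emph{both} $\Lambda^{2,1}$ and $\Lambda^{1,2}$; since $\psi^k_0$ lives in type $(0,2)$, $\mathrm{d}\psi^k_0$ is supported in $\Lambda^{1,2}$ only (there is no $\Lambda^{0,3}$). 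Thus from $\mathrm{d}\psi^k+\sqrt{k+2}\,\mathrm{d}\psi^k_0=0$ you only get that the $(2,1)$-component of $\mathrm{d}\psi^k$ vanishes, while in type $(1,2)$ the two contributions could cancel against each other. No grading on the coefficient module $I_{\phi_{\infty}}\otimes V_k$ that is respected by the $X_i$-action separates them. The paper closes this gap by a direct computation: Theorem~\ref{g-action} applied to $W^k_{0,l}=W^{\frac{k}{2},-\frac{k}{2}-3}_{-\frac{k}{2}+l,\frac{k}{2}}$ (where $j=m_2=\frac{k}{2}$) shows that $X_1\cdot W^k_{0,l}=X_2\cdot W^k_{0,l}=0$, since the factor $\sqrt{j-m_2}$ kills one term and the coefficient $2j-m_2+n+3=0$ kills the other. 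Together with~\eqref{g_act_poly} this gives $\mathrm{d}\psi^k_0=0$ outright, whence $\mathrm{d}\psi^k=0$ follows from~(i).

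For non-exactness, your assertion that $\Hom_{\mathfrak{l}_{\BC}}((\mathfrak{g}/\mathfrak{l})_{\BC},I_{\phi_{\infty}}\otimes V_k)$ is spanned by $\chi^k$ is not justified and is, at least as stated, stronger than what is needed. The paper instead argues as follows: if $\psi^k_0=\mathrm{d}\chi^k_0$, then inspecting which Wigner $D$-functions can produce a $W^k_{0,l}$ under $X_3$ or $X_4$ (via the shifts in Theorem~\ref{g-action} together with the constraints~\eqref{para1}--\eqref{para2}) forces $\chi^k_0(X_3),\chi^k_0(X_4)$ to be built from the functions $W^k_{\chi,l}$; then $\mathfrak{l}_{\BC}$-equivariance (specifically the vanishing of $(U_1-\iu U_2)\cdot\chi^k_0(X_3)$) pins down $\chi^k_0(X_3)$ as a nonzero scalar multiple of $\chi^k(X_3)$, which leads to a contradiction on $X_{13}$. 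This is the ``weight bookkeeping'' you allude to, but it is the substance of the argument, not something one can leave to a parenthetical remark.
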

\begin{proof}$\,$
($i$) By definition, and by using~\eqref{zero_bracket},
the morphism $\rm{d} \chi^k$ is such that
\[
X_{12} \mapsto X_1 \cdot \chi^k(X_2) - X_2 \cdot \chi^k(X_1) = 0
\]
\[
X_{34} \mapsto X_3 \cdot \chi^k(X_4) - X_4 \cdot \chi^k(X_3) 
\]
\[
X_{13} \mapsto X_1 \cdot \chi^k(X_3) - X_3 \cdot \chi^k(X_1) =  X_1 \cdot \chi^k(X_3)
\]
Hence, looking at the definition of $\psi^k$ and $\psi^k_0$, to show the formula in the statement, we have in particular to check that 
\[
X_3 \cdot \chi^k(X_4) - X_4 \cdot \chi^k(X_3) = \psi^k_0 (X_{34}) = \sum\limits_{l=0}^k \sqrt{\binom{k}{l}} W^k_{0,l} \otimes x^{k-l}y^l
\]
and that 
\[
X_1 \cdot \chi^k(3) = w_{13}^k= \frac{1}{\sqrt{k+2}}\sum\limits_{l=0}^{k} \frac{k-l+1}{k+1} \sqrt{l+1} \sqrt{\binom{k+1}{l}}  W^k_l \otimes x^{k-l}y^l
\]
This will be in fact \emph{sufficient} to show the desired formula, because of Remark~\ref{unique}. 

To perform the computation, we will use Lemma~\ref{lie_wigner} and the formulae for the action of the $X_i$'s on $V_k$ given in~\eqref{g_act_poly}.

To compute the value of $\rm{d} \chi^k$ on $X_{34}$, we get on one hand
\begin{align*}
X_3 \cdot \chi^k(X_4) & = X_3 \cdot \left(\frac{\sqrt{k+1}}{k+1} W^k_{\chi,1} \otimes x^k + \sum\limits_{l=1}^{k-1} \gamma_{l+1} \frac{l+1}{k-l} W^k_{\chi,l+1} \otimes x^{k-l}y^l + W^k_{\chi,k+1} \otimes y^k\right)\\
& = \frac{1}{k+2} W^k_{0,0} \otimes x^k + \sum\limits_{l=1}^{k-1} \gamma_{l+1} \frac{l+1}{k-l} \frac{\sqrt{l+1}\sqrt{k+1}}{k+2} W^k_{0,l} \otimes x^{k-l}y^l + \frac{k+1}{k+2} W^k_{0,k} \otimes y^k\\
& + \frac{k+3}{k+2}\widetilde{W}^0_{0,0} \otimes x^k + \sum\limits_{l=1}^{k-1} \gamma_{l+1} \frac{l+1}{k-l} \frac{k+3}{k+2} \sqrt{k+1-l}\widetilde{W}^k_{0,l} \otimes x^{k-l}y^l + \frac{k+3}{k+2} \widetilde{W}^k_{0,k} \otimes y^k \\
& + \frac{k}{k+1} \sqrt{k+1} W^k_{\chi,1} \otimes x^{k-1} z + \sum\limits_{l=1}^{k-1} \gamma_{l+1} (l+1) W^k_{\chi,l+1} \otimes x^{k-l-1}y^l z
\end{align*}
and on the other hand
\begin{align*}
X_4 \cdot \chi^k(X_3)  = & X_4 \cdot \left(\sum\limits_{l=0}^k \gamma_l W^k_{\chi,l} \otimes x^{k-l}y^l\right)\\
 = & -\sum\limits_{l=0}^k \gamma_l \frac{\sqrt{k+1-l} \sqrt{k+1}}{k+2} W^k_{0,l} \otimes x^{k-l}y^l
+ \sum\limits_{l=0}^k \gamma_l \frac{k+3}{k+2} \sqrt{l+1} \widetilde{W}^k_{0,l} \otimes x^{k-l} y^l\\
&\, + \sum\limits_{l=1}^k l\gamma_l W^k_{\chi,l} \otimes x^{k-l}y^{l-1}z 
\end{align*}

so that after using $\gamma_0=1, \gamma_1 = \frac{k}{k+1} \sqrt{k+1}, \gamma_k = \frac{1}{\sqrt{k+1}}$ in the above two equations, we get
{\scriptsize
\begin{align*}
X_3 \cdot \chi^k(X_4) - X_4 \cdot \chi^k(X_3) = & W^k_{0,0} \otimes x^k + \sum\limits_{l=1}^{k-1} \frac{\sqrt{k+1}}{k+2}\left(\gamma_{l+1} \frac{l+1}{k-l}\sqrt{l+1}+\gamma_l \sqrt{k+1-l}\right) W^k_{0,l} \otimes x^{k-l}y^l \\
& + W^k_{0,k} \otimes y^k + \sum\limits_{l=1}^{k-1} \frac{k+3}{k+2}\left(\gamma_{l+1} \frac{l+1}{k-l}\sqrt{k+1-l}-\gamma_l \sqrt{l+1}\right) \widetilde{W}^k_{0,l} \otimes x^{k-l}y^l
\end{align*}}
Since $\gamma_{l+1} = \frac{k-l}{\sqrt{l+1} \sqrt{k-l+1}} \gamma_{l}\,, \forall l \in \{ 1, \dots, k-1 \}$,
the previous expression is simplified as follows
\begin{align*}
X_3 \cdot \chi^k(X_4) - X_4 \cdot \chi^k(X_3) = W^k_{0,0} \otimes x^k + \sum\limits_{l=1}^{k-1} {\sqrt\frac{k+1}{k+1-l}} \gamma_l W^k_{0,l} \otimes x^{k-l}y^l +W^k_{0,k} \otimes y^k
\end{align*}
which given the definition of $\gamma_l$ provides the desired equality
\begin{align*}
 X_3 \cdot \chi^k(X_4) - X_4 \cdot \chi^k(X_3) =  \sum\limits_{l=0}^{k} \sqrt{\binom{k}{l}} W^k_{0,l} \otimes x^{k-l}y^l \,. 
\end{align*}

Let us now compute the value of $\rm{d} \chi^k$ on $X_{13}$. We have
\begin{align*}
X_1 \cdot \chi^k(X_3) & =  X_1 \cdot \left(\sum\limits_{l=0}^k \gamma_l W^k_{\chi,l} \otimes x^{k-l}y^l\right) \\
& =\sum\limits_{l=0}^k \gamma_l \sqrt{\frac{l+1}{k+2}} W^k_l \otimes x^{k-l}y^l \\
& = \frac{1}{\sqrt{k+2}} \sum\limits_{l=0}^k  \sqrt{\frac{k-l+1}{k+1}} \sqrt{l+1} \sqrt{\binom{k}{l}} W^k_l \otimes x^{k-l}y^l \\
& = \frac{1}{\sqrt{k+2}} \sum\limits_{l=0}^k  \frac{k-l+1}{k+1} \sqrt{l+1} \sqrt{\binom{k+1}{l}} W^k_l \otimes x^{k-l}y^l
\end{align*}

as desired. 

($ii$) Let us look at the action of $X_1$ and $X_2$ on the functions $ W^k_{0,l} = W^{\frac{k}{2},-\frac{k}{2}-3}_{-\frac{k}{2}+l, \frac{k}{2}} $ for $l \in \{0, \dots, k \}$. By applying Theorem~\ref{g-action}, we get $X_1 \cdot W^k_{0,l} = 0 = X_2 \cdot W^k_{0,l}$
for all $l \in \{0, \dots, k \}$. This fact and an application of~\eqref{g_act_poly} then show that $\rm{d} \psi^k_0 = 0$ because the only equality to be verified is $ X_1 \cdot \psi^k_0 (X_{34}) = 0 = X_2 \cdot \psi^k_0 (X_{34})$. Then, using point $(i)$, we conclude that
\[
\rm{d} \psi^k = \sqrt{k+2}(-\rm{d} \psi_0 + \rm{d} \rm{d} \chi) = 0
\]

$(iii)$ By contradiction, suppose $\psi^k_0 = \rm{d} \chi^k_0$ for some $
\chi_0 \in \Hom_{\mathfrak{l}_{\BC}}\left((\mathfrak{g} / \mathfrak{l})_{\BC}, I_{\phi_{\infty}} \otimes V_k\right)$.
This means in particular that 
\[
X_3 \cdot \chi^k_0(X_4) - X_4 \cdot \chi^k_0(X_3) = \rm{d} \chi^k_0(X_{34}) = \psi^k_0 (X_{34}) = \sum\limits_{l=0}^k \sqrt{\binom{k}{l}} W^k_{0,l} \otimes x^{k-l}y^l
\]
Now, both $\chi^k_0(X_4)$ and $\chi^k_0(X_3)$ are linear combinations of tensor products of Wigner $D$-functions and degree-$k$ monomials in the variables $x, y, z$. Suppose that a Wigner $D$-function $W^{j,n}_{m_1,m_2}$ contributes to $\chi^k_0(X_4)$. Then, by Theorem \ref{g_act_poly}, the function $X_3 \cdot W^{j,n}_{m_1,m_2}$ must have the form
\[
\alpha W^{j-\frac{1}{2},n-\frac{3}{2}}_{m_1-\frac{1}{2},m_2-\frac{1}{2}} + \beta W^{j+\frac{1}{2},n-\frac{3}{2}}_{m_1-\frac{1}{2},m_2-\frac{1}{2}}
\]
for some $\alpha, \beta \in \BC$. Hence, for one of the two Wigner $D$-functions involved in the latter expression to coincide with one of the $W^k_{0,l}$'s, i.e. with 
\[
W^{\frac{k}{2},-\frac{k}{2}-3}_{-\frac{k}{2}+l, \frac{k}{2}}
\]
for some $l \in \{0, \dots, k \}$, it is in particular necessary 
that 
\[
j \in \left\{\frac{k}{2}-\frac{1}{2},\frac{k}{2}+\frac{1}{2} \right\}, \ \ \ n=-\frac{k}{2}-\frac{3}{2}
\]
But remembering \eqref{para1} and \eqref{para2}, we must have $3m_2-(2k+3)=-\frac{k}{2}-\frac{3}{2}$ and $3j-(2k+3) \geq -\frac{k}{2}-\frac{3}{2}$, and this implies $m_2=\frac{k}{2}+\frac{1}{2}$, $j=\frac{k}{2}+\frac{1}{2}$. A similar analysis applies to the Wigner $D$-functions contributing to $\chi^k_0(X_3)$. 

We conclude that at least one of the elements $\chi^k_0(X_3)$ or $\chi^k_0(X_4)$ must include in its expression one of the functions $W_{\chi,l}^k, \ \ \ l \in \{ 0, \dots, k+1 \}$. 

Now observe that each of these functions is an eigenvector for the action of $U_0$ and $U_3$, and that the same holds for each degree-$k$ monomial in $x,y,z$. Since $\chi_0$ is $\mathfrak{l}_{\BC}$-equivariant and both $X_3$ and $X_4$ are eigenvectors for $U_0$ and $U_3$, the compatibility of the eigenvalues forces $\chi^k_0(X_3)$, if non-zero, to be a linear combination of 
\[
W^k_{\chi,l} \otimes x^{k-l}y^l
\]
for $l \in \{ 0, \dots, k \}$
and $\chi_0(X_4)$, if non-zero, to be a linear combination of 
\[
W^k_{\chi,l+1} \otimes x^{k-l}y^l
\]
for $l \in \{ 0, \dots, k \}$. 
Moreover, $\mathfrak{l}_{\BC}$-equivariance forces the conditions
\[
(U_1 - \iu U_2) \cdot \chi^k_0(X_3)=0\, \quad \text{and}\quad (U_1 + \iu U_2) \cdot \chi^k_0(X_4)=0
\]
By reverse-engineering the computation performed during the proof of Lemma \ref{chi_equiv}, we see that unless $\chi^k_0(X_3)=0$, the only possibility for the first condition to hold is that there exexists non-zero $\gamma \in \BC$ such that the equality 
\[
\chi^k_0(X_3) = 
\gamma\left(\sum\limits_{l=0}^k \sqrt{\binom{k}{l}} W^k_{0,l} \otimes x^{k-l}y^l\right)
\]
be verified. 

Now, since we are supposing $\psi^k_0 = \rm{d} \chi^k_0$, at least one of $\chi^k_0(X_3), \chi^k_0(X_4)$  must be non-zero. But as $\mathfrak{l}_{\BC}$-equivariance forces 
\[
(U_1 + \iu U_2) \cdot \chi^k_0(X_3)= - \iu \chi^k_0(X_4), \ \ \ (U_1 - \iu U_2) \cdot \chi^k_0(X_4)= - \iu \chi^k_0(X_3) 
\]
we see that $\chi^k_0(X_4) \neq 0$ if and only if $\chi^k_0(X_3) \neq 0$. Hence the images of $X_3$, $X_4$ under $\chi^k_0$ coincide, up to the scalar $\gamma$, with their images under the previously defined $\chi^k$. But then, using the computation of point $(ii)$, 
\[
\rm{d} \chi^k_0 (X_{13}) = X_1 \cdot \chi^k_0(X_3) = \gamma( X_1 \cdot \chi^k(X_3)) \neq 0
\]
which is absurd, since we are supposing $\rm{d} \chi^k_0(X_{13}) = \psi^k_0 (X_{13}) = 0$. This proves that $\psi^k_0$ cannot belong to the image of $\rm{d}$. Then, because of the equation
\[
\frac{1}{\sqrt{k+2}} \psi^k + \psi^k_0 = \rm{d} \chi^k
\]
the element $\psi^k$ cannot belong to the image of $\rm{d}$ either. 

($iv$) Given point $(i)$, of the fact that $\rm{d}$ preserves $\mathfrak{l}_{\BC}$-equivariance, of the support properties of $\psi^k$ and $\psi^k_0$ and of the fact that the bigrading of $\Lambda^2(\mathfrak{p}_{\BC})$ is preserved by the $\mathfrak{l}_{\BC}$-action, we see that both $\psi^k$ and $\psi^k_0$ are $\mathfrak{l}_{\BC}$-equivariant. Now the claim follows from points $(ii)$ and $(iii)$ and from the 1-dimensionality of $H^2(\mathfrak{g}, \mathfrak{l}, I_{\phi_{\infty}} \otimes V_k)$ (Corollary~\ref{1dim}).
\end{proof}

\section{Eisenstein classes on Picard surfaces} \label{eisclasses} 
This section aims to explain how to use the explicit representatives of a generator of $H^2(\mathfrak{g}, \mathfrak{l}, I_{\phi_{\infty}} \otimes V_k)$, provided by Theorem~\ref{mainthm}, to construct interesting differential forms on Picard modular surfaces. In particular, by definition, these differential forms will represent \emph{Eisenstein cohomology classes} on such surfaces. A good reference for everything we will say about Picard surfaces is~\cite{Gor92}.

\subsection{Picard surfaces} In what follows, we will fix a quadratic imaginary field $F$, and suppose that the 3-dimensional $\BC$-vector space $V$ is the extension of scalars to $\BC$ of a 3-dimensional $F$-vector space $V_F$, and that the hermitian form $J$ is the extension of scalars to $\BC$ of a $F$-valued hermitian form $J_F$ on $V_F$.  

We can then consider the algebraic group $\rG$ over $\BQ$ defined, for every $\BQ$-algebra $R$, by
\[
\rG(R)= \{ g \in \SL_{F \otimes_{\BQ} R}(V_F \otimes_{\BQ} R) \vert J_F(g \cdot, g \cdot)=J_F(\cdot, \cdot) \}
\]
Its real points satisfy $\rG(\BR)=G$
and we can consider its rational points $\rG(\BQ)$ as a subgroup of $G$. This provides us with a notion of \emph{congruence subgroup} of $G$. 

The coset space $X:=G / K_{\infty}$ is a  manifold diffeomorphic to an open ball in $\BC^2$, hence equipped with a natural complex structure. For any torsion-free congruence subgroup $\Gamma$ of $G$, the quotient
$S_{\Gamma}:=\Gamma \backslash X$
is a quasi-projective, smooth complex algebraic surface, called a \emph{Picard surface} (attached to $F$) of \emph{level} $\Gamma$. It can be identified with the $\BC$-points of a connected component of a Shimura variety attached to $\rG$, and as such, it turns out to be defined over a finite abelian extension of $F$. 

\subsection{Cohomology of local systems on Picard surfaces and Lie algebra cohomology} From now on, we fix a torsion-free congruence subgroup $\Gamma$ of $G$. For any integer $k \geq 0$, the symmetric powers $V_k=\Sym^k V$ define natural local systems on $S_{\Gamma}$. One is interested in studying the $V_k$-valued singular cohomology of the Picard surface $S_\Gamma$, denoted by $ H^{\bullet}(S_{\Gamma}, V_k)$. By~\cite[VII, Corollary 2.7]{BoWa}, there exists a natural isomorphism
\begin{equation} \label{isoBW}
H^{\bullet}(S_{\Gamma}, V_k) \simeq H^{\bullet}(\mathfrak{g}, \mathfrak{l}, \mathcal{C}^{\infty}(\Gamma \backslash G) \otimes V_k)
\end{equation}
where $\mathcal{C}^{\infty}(\Gamma \backslash G)$ denotes $\BC$-valued smooth functions on $\Gamma \backslash G$. 

\begin{rmk} \label{diff_forms}
The above isomorphism is constructed by first using the de Rham theorem to compute $H^{\bullet}(S_{\Gamma}, V_k)$ through $V_k$-valued smooth differential forms on $S_{\Gamma}$, and then providing a natural isomorphism between the $V_k$-valued de Rham complex and the complex computing relative Lie algebra cohomology of $\mathcal{C}^{\infty}(\Gamma \backslash G) \otimes V_k$. Because of this, by abuse of language, we will call \emph{differential forms} on $S_{\Gamma}$ the elements of the latter complex. Such a form will be called of \emph{type} $(p,q)$ if its $\BC$-linear extension to $\Lambda^2(\mathfrak{g} / \mathfrak{l})_{\BC}=\Lambda^2(\mathfrak{p}_{\BC})$ is supported on the $(p,q)$-part of the bigrading of $\Lambda^2(\mathfrak{p}_{\BC})$ defined in~\eqref{bigrading}.
\end{rmk}

Our goal is to see how to use the representatives of a generator of $H^2(\mathfrak{g}, \mathfrak{l}, I_{\phi_{\infty}} \otimes V_k)$, provided by Theorem~\ref{mainthm}, to construct differential forms, in the sense of the above remark, representing \emph{Eisenstein} classes in $H^{\bullet}(S_{\Gamma}, V_k)$ (to be defined in the next subsection).

To do so, it is convenient to switch to the adelic setting. Consider a compact open subgroup $K$ of $\rG(\BA_f)$ such that 
\begin{equation} \label{compactopen}
\Gamma = \rG(\BQ) \cap K
\end{equation}
We have then (see for example \cite[Proposition 4.18]{Milne2005})
\begin{equation} \label{adelic}
S_{\Gamma} \simeq \rG(\BQ) \backslash (X \times \rG(\BA_f) / K)
\end{equation}
Consider the space $\mathcal{C}^{\infty}(\rG(\BQ) \backslash \rG(\BA))$ of $\BC$-valued functions which are locally constant on $\rG(\BA_f)$ and smooth on the component at infinity $G$, and write $\mathcal{C}^{\infty}(\rG(\BQ) \backslash \rG(\BA))^K$ for the $K$-invariants with respect to the $\rG(\BA_f)$-action by right translation. Then, the isomorphism \eqref{adelic} allows us to rewrite \eqref{isoBW} as
\begin{equation} \label{isoBWadelic}
H^{\bullet}(S_{\Gamma}, V_k) \simeq H^{\bullet}(\mathfrak{g}, \mathfrak{l}, \mathcal{C}^{\infty}(\rG(\BQ) \backslash \rG(\BA))^K \otimes V_k)
\end{equation}

\subsection{Eisenstein cohomology of Picard surfaces}
The Picard surface $S_{\Gamma}$ is not compact. It embeds as an open submanifold of a canonical, compact real manifold with corners $
\overline{S_{\Gamma}}$
called the Borel-Serre compactification of $S_{\Gamma}$, having the property that the open immersion $j : S_{\Gamma} \hookrightarrow \overline{S_{\Gamma}}$ is a homotopy equivalence~\cite{BoSe73}. Hence, the local system $V_k$ extends canonically to $\overline{S_{\Gamma}}$, and there is an isomorphism $H^{\bullet}(S_{\Gamma}, V_k) \simeq H^{\bullet}(\overline{S_{\Gamma}}, V_k)$. If $\partial \overline{S_{\Gamma}}$ denotes the boundary $\overline{S_{\Gamma}} \setminus S_{\Gamma}$, this yields a restriction map towards \emph{boundary cohomology}
\begin{equation}\label{restrmap}
r: H^{\bullet}(S_{\Gamma}, V_k) \rightarrow H^{\bullet}(\partial \overline{S_{\Gamma}}, V_k)
\end{equation}

\begin{defi}\label{eis_def}
The image of $r$ is called~\emph{Eisenstein cohomology} of $S_{\Gamma}$ (with values in $V_k$) and denoted by
$H^{\bullet}_{\Eis}(S_{\Gamma}, V_k)$.
Classes in $H^{\bullet}(S_{\Gamma}, V_k)$ not belonging to the kernel of $r$ are then called \emph{Eisenstein classes}. 
\end{defi}

We want to recall Harder's description of $H^{\bullet}(\partial \overline{S_{\Gamma}}, V_k)$ and $H^{\bullet}_{\Eis}(S_{\Gamma}, V_k)$ (\cite{Har87b}). For this, we need to set up some language. Recall the maximal torus $T$ of $G$ defined in~\eqref{maxtorus}; it is the group of $\BR$-points of a maximal torus $\rT$ of the algebraic group $\rG$, arising as the Levi component of a Borel $\rB$ of $\rG$, with unipotent radical $\rN$ (with $\BR$-points given by the groups $B$ and $N$ of~\eqref{borel},~\eqref{unipotent}). 

\begin{defi}
An \emph{algebraic Hecke character} of $\rT$ is a continuous homomorphism $\phi : \rT(\BQ) \backslash \rT(\BA) \rightarrow \BC^{\times}$ such that its component at infinity $\phi_{\infty}$ is given (for $z \in \BC^{\times}$) by a character of the form
\begin{equation*}
\left(
\begin{array}{ccc}
\overline{z} & & \\
 & z \overline{z}^{-1} & \\
 & & z^{-1}
\end{array}
\right)
\mapsto z^{-a} \overline{z}^{-b}
\end{equation*}
where $a,b$ are integers. The couple $(a,b)$ is then called the \emph{infinity type} of $\phi$. If $\mu$ denotes the inverse of the above character, we will also say that such a $\phi$ is \emph{of type} $\mu$. 
\end{defi}
Whenever $\phi$ is a continuous $\BC^{\times}$-valued homomorphism on $\rT(\BQ) \backslash \rT(\BA)$, we can extend it to a continuous $\BC^{\times}$-valued homomorphism on $\rB(\BQ) \backslash \rB(\BA)$ by letting it be trivial on $\rN(\BA)$. We denote by $\phi_f$ the restriction of such a homomorphism to $\rB(\BA_f)$. 

\begin{defi}
Let $\phi$ be an algebraic Hecke character of $\rT$. We put
\[
I_{\phi, \BC} := \{ \mbox{locally constant} \  f: \rG(\BA_f) \rightarrow \BC \ \mbox{s. t.} \ f(bg)=\phi_f(b)f(g) \ \forall \ b \in \rB(\BA_f), g \in \rG(\BA_f) \}
\]
and for a compact open subgroup $K$ of $\rG(\BA_f)$, we denote by $I_{\phi,\BC}^K$ the $K$-invariants under the $\rG(\BA_f)$-action by right translation. 
\end{defi}

\begin{notation}
Let $W$ be the Weyl group of $\SL_3$. It is isomorphic to $S_3$, the symmetric group on 3 elements, and acts on characters of $\rT$ because of the isomorphism $\rG_F \simeq \SL_{3, F}$. The action of $w \in W$ on a character $\lambda$ is denoted by $w \cdot \lambda$. If $\rho$ is the half-sum of the positive roots of $\rT$, we put for $w \in W$, and $\lambda$ a character of $\rT$
\[
w \star \lambda := w \cdot (\lambda + \rho) - \rho
\]
\end{notation}

Let $K$ be as in~\eqref{compactopen}. The first result that we want to recall is a description of boundary cohomology. 
\begin{thm}{\cite[Eq.~(2.12),~Thm.~1]{Har87b}}
Let $k \geq 0$ be an integer and let $\lambda$ be the character of $\rT$ which induces the highest weight of the $G$-representation $V_k$. Then there is an isomorphism 
\[
H^{\bullet}(\partial \overline{S_{\Gamma}}, V_k) \simeq \bigoplus\limits_{w \in W} \bigoplus\limits_{\mbox{\emph{\tiny{type}}}(\phi)= w \star \lambda} I_{\phi, \BC}^K
\]
The space $I^K_{\phi,\BC}$ lives in cohomological degree $i$ if and only if $\phi$ is of type $w \star \lambda$ with the \emph{length} of $w$ being equal to $i$. 
\end{thm}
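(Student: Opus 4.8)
The plan is to follow the standard recipe for computing the cohomology of the Borel--Serre boundary of an arithmetic locally symmetric space, specialised to our rank-one situation (this is Harder's method in~\cite{Har87b}). First I would recall that, as a manifold with corners, $\partial\overline{S_{\Gamma}}$ decomposes into faces indexed by the $\rG(\BQ)$-conjugacy classes of proper $\BQ$-rational parabolic subgroups of $\rG$~\cite{BoSe73}. Since $\rG$ has $\BQ$-rank one (the Hermitian form $J_F$ being isotropic of Witt index one), there is a single such class, that of the Borel $\rB$ of~\eqref{borel}, which is at once the minimal and the maximal proper parabolic. The corresponding face is, adelically, fibred over the locally symmetric space attached to the Levi quotient $\rT$ of $\rB$, with fibre the compact nilmanifold $\Gamma_{\rN}\backslash\rN(\BR)$ modelled on the unipotent radical $\rN$; one encodes this, together with the residual $\rG(\BA_f)$-action, by writing the boundary cohomology of that face as parabolically induced from cohomological data on $\rB$.

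Next I would run the Leray (equivalently Hochschild--Serre) spectral sequence of this fibration, whose $E_2$-page is $H^{p}$ of the Levi locally symmetric space with coefficients in the local system whose fibre is $H^{q}$ of the nilmanifold. By Nomizu's theorem (and its twisted version, cf.~the machinery of~\cite{BoWa}) the latter is the Lie algebra cohomology $H^{q}(\mathfrak{n},V_k)$; and since $\rG$ is quasi-split, $\rB$ becomes a Borel of $\SL_3$ over $F$, so $\mathfrak{n}_{\BC}$ is the nilradical of a Borel of $\mathfrak{sl}_{3,\BC}$ and Kostant's theorem (cf.~\cite[Theorem~3.2.3]{Vog81}) yields a $\rT$-equivariant decomposition $H^{q}(\mathfrak{n},V_k)_{\BC}\simeq\bigoplus_{w\in W,\ \ell(w)=q}\BC_{w\star\lambda}$, each summand one-dimensional, appearing exactly once, in cohomological degree $\ell(w)$.

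The heart of the argument is then the computation of the cohomology of the locally symmetric space of the Levi torus $\rT$ with coefficients in each one-dimensional module $\BC_{w\star\lambda}$, refined to keep track of the $\rG(\BA_f)$-action. Because the split part of $\rT$ is one-dimensional and its anisotropic part contributes only a compact factor, this locally symmetric space is, up to finite data, a real line; a Fourier/Peter--Weyl decomposition of automorphic functions on $\rT(\BQ)\backslash\rT(\BA)$ then shows that the only contributions arise, in degree $0$, from those continuous characters $\phi$ of $\rT(\BQ)\backslash\rT(\BA)$ whose archimedean component matches the prescribed weight — that is, from the algebraic Hecke characters $\phi$ with $\mathrm{type}(\phi)=w\star\lambda$ — a nontrivial character of the one-dimensional split torus having no invariant cohomology, which simultaneously forces concentration in degree $0$ and excludes any further contributions. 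Hence the spectral sequence degenerates at $E_2$, and assembling the two factors gives, in degree $i$, a sum over the $w\in W$ with $\ell(w)=i$ and over the algebraic Hecke characters $\phi$ of type $w\star\lambda$.

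Finally I would translate the parabolic induction from $\rB(\BA_f)$ to $\rG(\BA_f)$ occurring in the description of the face into the representation $I_{\phi,\BC}$, so that passing to $K$-invariants produces exactly $I_{\phi,\BC}^{K}$; the degree bookkeeping is inherited verbatim from Kostant's length count, which gives the last assertion of the statement. The step I expect to be the main obstacle is precisely the torus computation of the previous paragraph: one must treat the non-split torus $\rT$ with care — the contribution of its connected components (a class-number matter), the exact role of the nontrivial split character in annihilating higher-degree cohomology, and the matching of the $\mathfrak{t}$-weights produced by Kostant with the admissible infinity types of algebraic Hecke characters — and it is exactly there that the condition $\mathrm{type}(\phi)=w\star\lambda$, together with the corresponding degree $\ell(w)$, is generated.
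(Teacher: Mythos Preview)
Your proposal is a correct outline of the standard Harder computation, and it is essentially the argument of~\cite{Har87b} that the paper is invoking. Note, however, that the paper does not supply its own proof of this statement: it is stated as a citation of~\cite[Eq.~(2.12),~Thm.~1]{Har87b} and simply recalled as background for the subsequent construction of Eisenstein classes, so there is no ``paper's own proof'' to compare against beyond the reference itself.
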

By computing the infinity types corresponding to the length 2 elements in the Weyl group, one gets the following.
\begin{coro}
Boundary cohomology in degree 2 is given by
\[
H^2(\partial \overline{S_{\Gamma}}, V_k) \simeq \bigoplus\limits_{\infty-\mbox{\emph{\tiny{type}}}(\phi)= (k,-k-3)} I_{\phi, \BC}^K \ \oplus \bigoplus\limits_{\infty-\mbox{\emph{\tiny{type}}}(\phi)= (-k-3,0)} I_{\phi, \BC}^K
 \]
\end{coro}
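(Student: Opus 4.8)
The plan is to deduce this corollary directly from the preceding theorem by making explicit the length-$2$ elements of the Weyl group $W \simeq S_3$ and the characters they produce by the twisted action $w \star \lambda = w\cdot(\lambda+\rho)-\rho$. First I would recall that the highest weight $\lambda$ of $V_k = \Sym^k V$, written in coordinates on the standard torus of $\SL_3$, is (up to the identification $\rG_F \simeq \SL_{3,F}$) of the form $\lambda = (k,0,0)$ in the weight lattice modulo the line spanned by $(1,1,1)$, so that $\lambda+\rho = (k,0,0)+(1,0,-1) = (k+1,0,-1)$. Then I would run over the two elements of $S_3$ of length $2$ — namely the two $3$-cycles, say $w_1 = (1\,2\,3)$ and $w_2 = (1\,3\,2)$ — apply each to $(k+1,0,-1)$ by permuting the entries, subtract $\rho=(1,0,-1)$, and read off the resulting character of $\rT$.

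Next I would translate the resulting triples into infinity types in the sense of the definition preceding the theorem, i.e. into pairs $(a,b)$ where the component at infinity of $\phi$ sends $\mathrm{diag}(\overline z, z\overline z^{-1}, z^{-1})$ to $z^{-a}\overline z^{-b}$. The point is to check that the two length-$2$ cycles give precisely the infinity types $(k,-k-3)$ and $(-k-3,0)$ claimed in the statement; this is a short bookkeeping computation relating the $\SL_3$-weight coordinates to the coordinate $z$ on $\rT$ as parametrized in the definition. Having identified these two infinity types, the corollary follows immediately from the theorem just quoted (Harder's description of $H^\bullet(\partial\overline{S_\Gamma},V_k)$): the degree-$2$ part of the direct sum over $w\in W$ picks out exactly the two summands indexed by $w_1$ and $w_2$, since length$(w)=2$ characterizes the degree-$2$ contribution, and each contributes $\bigoplus_{\infty\text{-type}(\phi)=w\star\lambda} I^K_{\phi,\BC}$.

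The main obstacle — really the only non-formal point — is getting the normalizations and sign conventions right: one has to be careful about whether $\lambda$ or its dual enters, about the precise correspondence between the abstract $\SL_3$-weight coordinates and the explicit parametrization of $\rT$ by $(r,t)$ (resp. by $z\in\BC^\times$) fixed in~\eqref{maxtorus} and in the definition of algebraic Hecke character, and about how $\rho$ is normalized. A useful consistency check I would build in is that the two infinity types obtained must be compatible with the general shape forced on them — each must be a genuine infinity type $(a,b)\in\BZ^2$ — and, if one wishes, one can cross-check against the length-$0$, length-$1$ and length-$3$ computations (the length-$0$ piece should recover the infinity type attached to $\lambda$ itself), which together span all of $S_3$ and hence all of $H^\bullet(\partial\overline{S_\Gamma},V_k)$. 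Once the two triples are computed and converted, there is nothing left to prove.
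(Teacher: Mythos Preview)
Your proposal is correct and follows exactly the approach the paper indicates: the paper simply states that the corollary is obtained ``by computing the infinity types corresponding to the length $2$ elements in the Weyl group,'' and your outline carries out precisely this computation via the twisted action $w\star\lambda$ on the highest weight of $V_k$ for the two $3$-cycles in $S_3$. Your cautionary remarks about normalization are well placed, since that is indeed the only content here; otherwise there is nothing to add.
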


\begin{notation}
We normalize the isomorphism $W \simeq S_3$ in the following way: in the Corollary above, the permutation $(1\ 2\ 3)$ acts on $\lambda$ so that the corresponding infinity type is $(k,-k-3)$, and the permutation $(1\ 3\ 2)$ acts on $\lambda$ so that the corresponding infinity type is $(-k-3,0)$. We then call 
\[
\bigoplus\limits_{\infty-\mbox{\emph{\tiny{type}}}(\phi)= (k,-k-3)} I_{\phi, \BC}^K, \ \ \bigoplus\limits_{\infty-\mbox{\emph{\tiny{type}}}(\phi)= (-k-3,0)} I_{\phi, \BC}^K
\]
respectively the $(1\ 2\ 3)$-\emph{part} and $(1\ 3\ 2)$-\emph{part} of boundary cohomology. 
\end{notation}

\begin{rmk} \label{char_inf}
If the infinity type of $\phi$ is $(k,-k-3)$, then by definition the component at infinity of $\phi$ coincides with the character $\phi_{\infty}$ of \eqref{character_k}. One has the principal series representation $I_{\phi_{\infty}}$~\eqref{princseries} and we know that the space $H^2(\mathfrak{g}, \mathfrak{l}, I_{\phi_{\infty}} \otimes V_k)$ is one-dimensional (Corollary \ref{1dim}). An argument along the same lines shows that when $\phi$ has infinity type $(-k-3,0)$, the corresponding space $H^2(\mathfrak{g}, \mathfrak{l}, I_{\phi_{\infty}} \otimes V_k)$ is one-dimensional as well.  
\end{rmk}

We can now recall a second and most crucial result, which is a part of Harder's general description of Eisenstein cohomology of Picard surfaces. In the statement, we make use of the natural embedding of the idèles $\BI:= \BA^{\times}$ of $\BQ$
\begin{align}\label{restrQ}
\BI & \hookrightarrow \rT(\BA) \quad \text{defined by} \quad
x  \mapsto \left(
\begin{array}{ccc}
x & & \\
 & 1 & \\
 & & x^{-1}
\end{array}
\right) 
\end{align}
and the norm on $\BI$ is denoted by $\vert \cdot \vert_{\BI}$. Moreover, the $L$-function $L(\phi,s)$ of an algebraic Hecke character $\phi$ makes here its appearance. 
\begin{thm}{\cite[Thm.~2 and its proof]{Har87b}} \label{thmhard2}
Let $\phi$ be an algebraic Hecke character of $\rT$. Suppose that 
\begin{itemize}
\item either the infinity type of $\phi$ is $(-k-3,0)$,
\item or the infinity type of $\phi$ is $(k,-k-3)$ and one of the two following conditions holds: 
\begin{enumerate}
\item the restriction $\phi_{\BQ}$ of $\phi$ to $\BI$ along~\eqref{restrQ} satisfies $\phi_{\BQ} \neq \epsilon_{F \vert \BQ} \cdot \vert \cdot \vert^3_{\BI}$
where $\epsilon_{F \vert \BQ}$ denotes the quadratic character corresponding to $F \vert \BQ$;
\item we have $\phi_{\BQ} = \epsilon_{F \vert \BQ} \cdot \vert \cdot \vert^3_{\BI} \ \mbox{and} \ L(\phi,-1)=0$.
\end{enumerate}
\end{itemize}
Then for each integer $q$, there exist maps compatible with the differential
\[
\Eis_{\phi} : \Hom_{\mathfrak{l}}(\Lambda^q(\mathfrak{g} / \mathfrak{l}), I^K_{\phi,\BC} \otimes I_{\phi_{\infty}} \otimes V_k) \rightarrow \Hom_{\mathfrak{l}}(\Lambda^q(\mathfrak{g} / \mathfrak{l}), \mathcal{C}^{\infty}(\rG(\BQ) \backslash \rG(\BA))^K \otimes V_k)
\]
inducing a map on cohomology
\[
\Eis_{\phi} : I^K_{\phi,\BC} \otimes H^2(\mathfrak{g}, \mathfrak{l}, I_{\phi_{\infty}} \otimes V_k) \rightarrow H^2(S_\Gamma, V_k)
\]
such that for any choice of a generator $\Psi$ of $H^2(\mathfrak{g}, \mathfrak{l}, I_{\phi_{\infty}} \otimes V_k)$ (cfr.~Remark \ref{char_inf}) and resulting isomorphism 
\begin{equation}\label{isopsi}
\iota_{\Psi} : I_{\phi,\BC}^K \simeq I_{\phi,\BC}^K \otimes H^2(\mathfrak{g}, \mathfrak{l}, I_{\phi_{\infty}} \otimes V_k) 
\end{equation}
the composition $r \circ \Eis_{\phi} \circ \iota_{\Psi}$ with the restriction map~\eqref{restrmap} equals the identity of $I_{\phi,\BC}^K$.
\end{thm}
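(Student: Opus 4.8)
The plan is to realize $\Eis_{\phi}$ as the value at a distinguished point of a family of Eisenstein series attached to the minimal parabolic $\rB$, and then to read off the identity $r\circ\Eis_{\phi}\circ\iota_{\Psi}=\Id$ from the computation of the constant term along $\rN$. First I would make the construction at the level of the Chevalley--Eilenberg complex: an element $\omega\in\Hom_{\mathfrak{l}}(\Lambda^{q}(\mathfrak{g}/\mathfrak{l}),I^K_{\phi,\BC}\otimes I_{\phi_{\infty}}\otimes V_k)$ assigns to each $X$ a vector $\omega(X)$, and since $I_{\phi,\BC}\otimes I_{\phi_{\infty}}$ is the space underlying the adelic induced representation $\Ind_{\rB(\BA)}^{\rG(\BA)}\phi$, one may view $\omega(X)$ as a smooth, $K$-invariant, $\mathfrak{l}$-finite, $V_k$-valued function $F_X$ on $\rG(\BA)$ with $F_X(bg)=\phi(b)F_X(g)$ for $b\in\rB(\BA)$. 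Twisting the $\BQ$-split part of $\rT$ by $\vert\cdot\vert^s$ gives a holomorphic family $F_{X,s}$, and for $\Re(s)\gg 0$ the series
\[
\Eis_{\phi}(\omega)(X)(g):=\sum_{\gamma\in\rB(\BQ)\backslash\rG(\BQ)}F_{X,s}(\gamma g)
\]
converges absolutely and locally uniformly by Godement's criterion, is left $\rG(\BQ)$-invariant and smooth, and depends $\mathfrak{l}$-equivariantly on $\omega$. Because the differential \eqref{diff} only involves the $\mathfrak{g}$-action and the combinatorics of $\Lambda^{\bullet}(\mathfrak{g}/\mathfrak{l})$, both of which commute with the sum over $\rB(\BQ)\backslash\rG(\BQ)$, the assignment $\omega\mapsto\Eis_{\phi}(\omega)$ is a morphism of complexes, hence descends to cohomology once the family has been continued to $s=0$.

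Next I would invoke the meromorphic continuation of Eisenstein series (Langlands, or Harder's direct treatment in \cite{Har87b} for this $\BQ$-rank one group), together with the computation of the constant term along $\rN$,
\[
\int_{\rN(\BQ)\backslash\rN(\BA)}\Eis_{\phi}(\omega)(X)(ng)\,dn=F_{X,s}(g)+\bigl(M(s)F_{X,s}\bigr)(g),
\]
where $M(s)$ is the global intertwining operator associated with the non-trivial element of the relative Weyl group. Writing $M(s)$ as a restricted tensor product of local operators and applying the Gindikin--Karpelevich formula, its scalar normalizing factor is a ratio of completed $L$-functions of the restriction $\phi_{\BQ}$ and of $\epsilon_{F\vert\BQ}$; an inspection of this factor shows that it is holomorphic and non-vanishing at $s=0$ (the point dictated by the cohomological degree, equivalently by the infinity type $w\star\lambda$) unless $\phi_{\BQ}=\epsilon_{F\vert\BQ}\cdot\vert\cdot\vert^3_{\BI}$, in which case it has a simple pole whose residue is proportional to $L(\phi,-1)$. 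When the infinity type of $\phi$ is $(-k-3,0)$, the corresponding Weyl element renders the numerator $L$-factor regular at $s=0$ for positivity reasons, so no pole occurs; when it is $(k,-k-3)$, hypotheses (i) and (ii) are exactly the statements that either $\phi_{\BQ}\neq\epsilon_{F\vert\BQ}\cdot\vert\cdot\vert^3_{\BI}$ (no pole), or $\phi_{\BQ}=\epsilon_{F\vert\BQ}\cdot\vert\cdot\vert^3_{\BI}$ but $L(\phi,-1)=0$ (the residual operator annihilates our sections, so again no pole). In all allowed cases $\Eis_{\phi}(\omega)(X)$ is holomorphic at $s=0$, and we set $\Eis_{\phi}(\omega)$ equal to its value there; holomorphic evaluation commutes with the differential, so the induced map on $H^2$ is well defined.

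It remains to identify $r\circ\Eis_{\phi}\circ\iota_{\Psi}$. Under the Borel--Serre description of $\partial\overline{S_{\Gamma}}$ as a disjoint union of nilmanifold bundles over locally symmetric spaces for $\rT$, together with the van Est/Kostant computation underlying Theorem~\ref{delorme} and the boundary isomorphism of \cite[Thm.~1]{Har87b}, the restriction map $r$ of \eqref{restrmap} is identified with the passage to the constant term along $\rN(\BA)$ followed by the projection onto $H^{\bullet}(\mathfrak{n},V_k)$, i.e. onto $\Hom(\Lambda^{\bullet}(\mathfrak{t}/\mathfrak{m}),(H^{\bullet}(\mathfrak{n},V_k)\otimes\BC_{\phi_{\infty}})(0))$. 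Applying this to $\Eis_{\phi}(f\otimes\Psi)$ at $s=0$ and using the constant-term formula, the first summand $F_{X,0}$ is, by construction, the section $f$ tensored with the cocycle $\Psi$, so it recovers $f\in I^K_{\phi,\BC}$; the second summand $M(0)F_{X,0}$ is a section of $\Ind_{\rB(\BA)}^{\rG(\BA)}(w\cdot\phi)$, whose associated Weyl element has length $1$, so by Kostant its Chevalley--Eilenberg cohomology vanishes in degree $2$ and it contributes nothing to $H^{2}(\partial\overline{S_{\Gamma}},V_k)$. Hence $r(\Eis_{\phi}(f\otimes\Psi))=f$, that is, $r\circ\Eis_{\phi}\circ\iota_{\Psi}=\Id_{I^K_{\phi,\BC}}$.

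I expect the main obstacle to be the second step above, more precisely the analytic bookkeeping: identifying the exact completed $L$-factors entering $M(s)$, proving that hypotheses (i) and (ii) are \emph{precisely} what makes the family holomorphic at $s=0$ (equivalently that the vanishing $L(\phi,-1)=0$ kills the residual operator on the relevant sections), and verifying that the intertwined term lands in a part of boundary cohomology that does not interfere in degree $2$. By contrast the convergence for $\Re(s)\gg 0$, the meromorphic continuation, the compatibility of $\Eis_{\phi}$ with the Chevalley--Eilenberg differential, and the identification of $r$ with the constant-term map are standard and can be quoted from \cite{Har87b} and \cite{BoWa}.
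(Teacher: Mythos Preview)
Your proposal is correct and follows essentially the same route as the paper, which in fact does not give a self-contained proof but only a brief sketch deferring to \cite{Har87b} and \cite{Lan76}: one forms the Eisenstein series by summing the induced section over $\rB(\BQ)\backslash\rG(\BQ)$ after twisting by $\vert\cdot\vert^s$, appeals to Langlands for meromorphic continuation, and uses the constant term formula to control holomorphy at $s=0$ and to identify the restriction to the boundary. Your write-up is actually more detailed than the paper's indication---in particular your explanation of why the intertwined summand $M(0)F$ lands in a piece of boundary cohomology of the wrong degree (via Kostant and the length of the corresponding Weyl element) makes explicit what the paper leaves implicit in its citation of Harder.
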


This implies: 
\begin{coro} \label{eisclass}
Let $\phi$ satisfy one of the conditions of Theorem~\ref{thmhard2}. For any non-zero $f \in I^K_{\phi,\BC}, \ \Psi \in H^2(\mathfrak{g}, \mathfrak{l}, I_{\phi_{\infty}} \otimes V_k)$, the class 
$\Eis_{\phi}(f \otimes \Psi) \in H^2(S_{\Gamma}, V_k)$ is an \emph{Eisenstein class} (Definition~\ref{eis_def}) restricting to $f \in H^2_{\Eis}(S_{\Gamma},V_k) \hookrightarrow H^2(\partial \overline{S_{\Gamma}}, V_k)$ at the boundary.  
\end{coro}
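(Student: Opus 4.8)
The plan is to deduce the statement directly from Theorem~\ref{thmhard2} by unwinding the definitions involved, with essentially no extra computation.

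First I would record that, whichever of the two admissible infinity types $\phi$ has, the space $H^2(\mathfrak{g}, \mathfrak{l}, I_{\phi_{\infty}} \otimes V_k)$ is one-dimensional: this is Corollary~\ref{1dim} when the infinity type of $\phi$ is $(k,-k-3)$, and the analogous assertion pointed out in Remark~\ref{char_inf} when it is $(-k-3,0)$. Consequently any non-zero $\Psi$ is automatically a generator, so Theorem~\ref{thmhard2} applies to $\phi$ and to the chosen $\Psi$: it furnishes the map $\Eis_{\phi}$ on cohomology, the isomorphism $\iota_{\Psi}$ of~\eqref{isopsi}, and the identity $r \circ \Eis_{\phi} \circ \iota_{\Psi} = \mathrm{id}_{I^K_{\phi,\BC}}$, where $r$ is the restriction map~\eqref{restrmap}.

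Next I would use that the isomorphism $\iota_{\Psi}$ attached to a generator $\Psi$ of a one-dimensional space is, tautologically, the map $f \mapsto f \otimes \Psi$. Substituting this into the previous identity yields
\[
r\bigl(\Eis_{\phi}(f \otimes \Psi)\bigr) \;=\; r\bigl(\Eis_{\phi}(\iota_{\Psi}(f))\bigr) \;=\; f .
\]
Since $f \neq 0$, this shows at once that $\Eis_{\phi}(f \otimes \Psi) \in H^2(S_{\Gamma}, V_k)$ is not killed by $r$, hence is an Eisenstein class in the sense of Definition~\ref{eis_def}, and that its boundary restriction is $f$, which therefore lies in the image $H^2_{\Eis}(S_{\Gamma}, V_k) \hookrightarrow H^2(\partial \overline{S_{\Gamma}}, V_k)$ of $r$. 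This is exactly the asserted conclusion.

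I do not expect any genuine obstacle here: the substance — the existence of $\Eis_{\phi}$ and the evaluation of $r \circ \Eis_{\phi} \circ \iota_{\Psi}$ — is supplied by Harder's Theorem~\ref{thmhard2}, which we invoke as a black box. The only points to watch are the explicit identification of $\iota_{\Psi}$ with $f \mapsto f \otimes \Psi$ and the routine check that the hypotheses we impose on $\phi$ are precisely those required by that theorem.
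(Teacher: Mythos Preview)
Your proposal is correct and matches the paper's approach: the paper simply writes ``This implies:'' before the corollary and gives no separate proof, treating the statement as an immediate unwinding of Theorem~\ref{thmhard2} exactly as you do.
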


Let us indicate briefly how the morphisms $\Eis_{\phi}$ are constructed. By definition, we have an embedding $I^K_{\phi,\BC} \otimes I_{\phi_{\infty}} \hookrightarrow \mathcal{C}^{\infty}(\rB(\BQ) \backslash \rG(\BA))^K$ from which one would like to get a map $I^K_{\phi,\BC} \otimes I_{\phi_{\infty}} \rightarrow \mathcal{C}^{\infty}(\rG(\BQ) \backslash \rG(\BA))^K$ by sending $\Phi \in I^K_{\phi,\BC} \otimes I_{\phi_{\infty}}$ (seen as a smooth function on $\rB(\BQ) \backslash \rG(\BA)$) to the function defined for $g \in \rG(\BA)$ by
\[
g \mapsto \sum\limits_{\gamma \in \rB(\BQ) \backslash \rG(\BQ)} \Phi(\gamma g)
\]
The problem is that such an infinite sum may be divergent. To solve it, one fixes an isomorphism 
\begin{equation} \label{isoideles}
\BI_F \simeq \rT(\BA)
\end{equation}
whose component at infinity is given by sending $z \in \BC$ to
\begin{equation*}
\left(
\begin{array}{ccc}
\overline{z} & & \\
 & z \overline{z}^{-1} & \\
 & & z^{-1}
\end{array}
\right)
\end{equation*}
and considers the character
$\vert \cdot \vert : \rT(\BA) \rightarrow \BC^{\times}$ 
given by the adelic norm under \eqref{isoideles}. One extends this to a map $\vert \cdot \vert : \rG(\BA) \rightarrow \BC^{\times}$, by considering a suitable maximal compact subgroup $K_{\BA}$ of $\rG(\BA)$ and the associated Iwasawa decomposition, and letting $\vert \cdot \vert$ be trivial on $\rN(\BA)$ and on $K_{\BA}$. Then it is known (\cite{Lan76}) that for $s \in \BC$ with large enough real part, the infinite sum\footnote{Note that as observed in the footnote in section \ref{induction}, we are not using unitary induction. This explains the absence of the half-sum of positive roots in our formulae.}
\[
\sum\limits_{\gamma \in \rB(\BQ) \backslash \rG(\BQ)} \Phi(\gamma g) \vert \gamma g \vert^s
\]
is absolutely convergent,  thus allowing one to define an element
\[
\Eis_{\phi,s}(\Phi) \in \mathcal{C}^{\infty}(\rG(\BQ) \backslash \rG(\BA))^K
\]
The content of Theorem~\ref{thmhard2} is to give conditions under which the limit
\[
\lim_{s \to 0} \Eis_{\phi,s}(\Phi)
\]
exists and induces, after tensoring with $V_k$, passing to $(\mathfrak{g}, \mathfrak{l})$-cohomology and using the isomorphism~\eqref{isoBWadelic}, a map $\Eis_{\phi}$ with the desired properties. The crucial tool to obtain such conditions is \emph{Langlands' constant term formula} (\cite{Lan76}), which shows how the $L$-function of $\phi$ controls convergence of the limit.

In view of the applications to the study of $L$-functions that we have in mind, the case of interest for us is the one of Eisenstein classes arising from a $\phi$ of infinity type $(k,-k-3)$. By putting together our main Theorem~\ref{mainthm} and Corollary~\ref{eisclass}, we get
\begin{coro} \label{finalcoro}
Fix an integer $k \geq 0$ and let $\phi$ be an algebraic Hecke character of $\rT$ of infinity type $(k,-k-3)$, satisfying one of the conditions of Theorem~\ref{thmhard2}. Let $\psi^k$ and $\psi_0^k$ be the elements considered in Theorem~\ref{mainthm}. For any non-zero $f \in I^K_{\phi,\BC}$, the differential forms on $S_{\Gamma}$ 
\[
\Eis_{\phi}(f \otimes \psi^k), \ \Eis_{\phi}(f \otimes \psi_0^k)
\]
provide representatives, of type respectively $(1,1)$ and $(0,2)$ in the sense of Remark~\ref{diff_forms}, of an Eisenstein class in $H^2(S_{\Gamma}, V_k)$, restricting to $f$ in $H^2(\partial \overline{S_{\Gamma}}, V_k)$.
\end{coro}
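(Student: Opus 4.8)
The plan is to derive the statement by feeding the explicit representatives furnished by Theorem~\ref{mainthm} into the Eisenstein machinery recalled above. First I would fix the set-up: since $\phi$ has infinity type $(k,-k-3)$, Remark~\ref{char_inf} identifies its archimedean component with the character $\phi_\infty$ of~\eqref{character_k}, so the principal series $I_{\phi_\infty}$ and the one-dimensional space $H^2(\mathfrak{g},\mathfrak{l},I_{\phi_\infty}\otimes V_k)$ occurring in Theorem~\ref{thmhard2} are exactly the objects studied in Sections~\ref{princ}--\ref{relativeLie}. By Theorem~\ref{mainthm}(ii),(iv), each of $\psi^k$ and $\psi^k_0$ is a closed, $\mathfrak{l}_\BC$-equivariant element of the degree-$2$ Chevalley--Eilenberg complex whose class generates $H^2(\mathfrak{g},\mathfrak{l},I_{\phi_\infty}\otimes V_k)$; restricting along~\eqref{realform}, it is a closed element of $\Hom_{\mathfrak{l}}(\Lambda^2(\mathfrak{g}/\mathfrak{l}),I_{\phi_\infty}\otimes V_k)$, and since $\mathfrak{g}$ acts trivially on $I_{\phi,\BC}^K$, tensoring with $f$ gives a closed element $f\otimes\psi^k$ (resp.\ $f\otimes\psi^k_0$) of $\Hom_{\mathfrak{l}}(\Lambda^2(\mathfrak{g}/\mathfrak{l}),I_{\phi,\BC}^K\otimes I_{\phi_\infty}\otimes V_k)$.

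Next, since $\phi$ satisfies one of the conditions of Theorem~\ref{thmhard2}, I would invoke Corollary~\ref{eisclass} with $\Psi=\psi^k$ and, separately, with $\Psi=\psi^k_0$. As $\Eis_\phi$ is compatible with the differential, $\Eis_\phi(f\otimes\psi^k)$ and $\Eis_\phi(f\otimes\psi^k_0)$ are closed elements of the complex computing $H^\bullet(S_\Gamma,V_k)$ through~\eqref{isoBWadelic}, i.e.\ differential forms in the sense of Remark~\ref{diff_forms}, and Corollary~\ref{eisclass} tells us that the class each represents is an Eisenstein class (Definition~\ref{eis_def}) restricting to $f$ at the boundary. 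That the two classes span a single line is built in: Theorem~\ref{mainthm}(i), namely $\frac{1}{\sqrt{k+2}}\psi^k+\psi^k_0=\mathrm{d}\chi^k$, gives $[\psi^k]=-\sqrt{k+2}\,[\psi^k_0]$ in $H^2(\mathfrak{g},\mathfrak{l},I_{\phi_\infty}\otimes V_k)$, so by $\BC$-linearity of $\Eis_\phi$ the two Eisenstein classes differ by the non-zero scalar $-\sqrt{k+2}$; in particular, up to normalization, they represent a single Eisenstein class.

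It remains to read off the types. By Remark~\ref{diff_forms}, the type of a differential form in the complex computing $H^\bullet(S_\Gamma,V_k)$ via~\eqref{isoBWadelic} is determined by the support, with respect to the bigrading~\eqref{bigrading} of $\Lambda^2(\mathfrak{p}_\BC)$, of its $\BC$-linear extension. The maps $\Eis_\phi$ of Theorem~\ref{thmhard2} are induced by a morphism of coefficient modules $I_{\phi,\BC}^K\otimes I_{\phi_\infty}\otimes V_k\to\mathcal{C}^\infty(\rG(\BQ)\backslash\rG(\BA))^K\otimes V_k$: they act by post-composition and leave the argument in $\Lambda^\bullet(\mathfrak{g}/\mathfrak{l})$ untouched, so they carry a form supported on $\Lambda^{p,q}(\mathfrak{p}_\BC)$ to one supported on the same graded piece. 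Since $\psi^k$ is supported on $\Lambda^{1,1}(\mathfrak{p}_\BC)$ and $\psi^k_0$ on $\Lambda^{0,2}(\mathfrak{p}_\BC)$ by Theorem~\ref{mainthm}(iv), the forms $\Eis_\phi(f\otimes\psi^k)$ and $\Eis_\phi(f\otimes\psi^k_0)$ have type $(1,1)$ and $(0,2)$ respectively, which completes the argument.

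The genuinely new content is entirely contained in Theorem~\ref{mainthm}; the rest is bookkeeping, and the only step I would take care to spell out is the one used above in reading off the types: that ``type'' is a well-defined invariant on the image of $\Eis_\phi$ and is computed coefficient-wise. This follows from the functoriality in the coefficient module of the de Rham--to--relative-Lie-algebra-cohomology comparison underlying~\eqref{isoBWadelic} and Remark~\ref{diff_forms}, together with the fact that the Eisenstein summation defining $\Eis_\phi$ operates only on that module. I expect this to be the only place where a reader might want more detail.
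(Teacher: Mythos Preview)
Your proposal is correct and follows exactly the route the paper indicates: the paper itself offers no proof beyond the sentence ``By putting together our main Theorem~\ref{mainthm} and Corollary~\ref{eisclass}, we get'', and your argument is a faithful unpacking of that sentence, including the observation that $\Eis_\phi$ is induced by a map on coefficient modules and therefore preserves the $(p,q)$-support on $\Lambda^2(\mathfrak{p}_\BC)$. The extra remark that $[\psi^k]=-\sqrt{k+2}\,[\psi_0^k]$ makes explicit why the two forms represent the \emph{same} Eisenstein class up to scalar, which the paper leaves implicit.
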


\section*{Acknowledgements}
JB would like to thank the Department of Mathematics, Kiel University, Germany for the hospitality and support. MC thanks the Foundation Mathématique Jacques Hadamard and the Laboratoire de Mathématiques d'Orsay for the excellent working conditions. Thanks are also due to G. Ancona, L. Clozel, J. Fresán, G. Harder, and V. Hernandez for discussions and comments on the first draft of this article.

\nocite{}
\bibliographystyle{abbrv}
\bibliography{BC}

\end{document}